\newtheorem{theorem}{Theorem}
\newtheorem{theo}{Theorem}
\newtheorem{lemm}{Lemma}
\newtheorem{coro}[lemm]{Corollary}
\newtheorem{prop}[lemm]{Proposition}
\newtheorem{rema}[lemm]{Remark}
\newtheorem{defi}[lemm]{Definition}
\newtheorem*{lemm*}{Lemma}
\newtheorem*{clai*}{Claim}
   \def\PP{{\mathbb P}}
 \def\RR{{\mathbb R}}
   \def\cN{{\cal N}} 
\def\cC{{\cal C}}    \def\cU{{\cal U}}
    \def\cX{{\cal X}}
\def\set#1{\left\{\, #1 \,\right\}}
\def\norm #1{\Vert \,#1\, \Vert\,}
\title{ Star vector fields on three-manifolds are multi-singular hyperbolic.}
\author{Jennyffer Bohorquez,  Adriana da Luz, Nelda Jaque} 
\begin{document}
\maketitle

\begin{abstract}

The coexistence of singularities and regular orbits in chain transitive sets has been a major obstacle in understanding the hyperbolic/partial hyperbolic nature of robust dynamics. Notably, the vector fields with all periodic orbits robustly hyperbolic (star flows), are hyperbolic in absence of singularities.

Morales, Pacifico and Pujals proposed  a partial hyperbolicity called ``singular hyperbolicity" that characterizes an open and dense subset of three dimensional star flows. In higher dimensions, Bonatti and da Luz characterize an open and dense set of star vector fields by multi-singular hyperbolicity.
In this article, we prove that a vector field exhibiting all periodic orbits robustly of the same index is multi-singular hyperbolic, generalizing the previous results. As a corollary, we obtained that all three-dimensional star flows are multi-singular hyperbolic. Moreover, if all singularities in the same class exhibit the same index, the star flow is singular hyperbolic. Additionally, star flows with robust chain recurrence classes in any
dimension are  multisingular hyperbolic.
\end{abstract}



\section{Introduction}



Since the 60's there has been a considerable amount of interest and effort in understanding which vector fields or diffeomorphisms are stable  (i.e., systems whose topological dynamics are
unchanged under small perturbations). When not, studies investigated the closeness of a system  to stability.
A major question has been the characterization of the structural stability  by  hyperbolicity (i.e., a global structure  in the tangent space expressed by expansion and contraction of transverse subspaces).
This characterization, first stated in the stability conjecture \cite{PaSm}, was proven for diffeomorphisms in $C^1$ topology by Robin and Robinson in \cite{R1}, \cite{R2}
(hyperbolic systems are structurally stable) and Ma\~{n}\'e \cite{Ma2} (structurally stable systems are hyperbolic).

The dynamics of flows are believed to be  related to the dynamics of diffeomorphisms. However, a phenomenon exists that is specific to vector fields: the existence of singularities (zeros of the vector field) accumulated, robustly, by regular recurrent orbits. This setting is more challenging,
indicating why the equivalent result for flows in the stability conjecture (also for the $C^1$ topology) was proven in \cite{H2} nearly ten years after the results in \cite{Ma2}.

An example of this behavior has been indicated by Lorenz in \cite{Lo}. Guckenheimer and Williams, in \cite{GW}, constructed
a $C^1$-open set of vector fields in a three-manifold, exhibiting a topologically transitive attractor containing  periodic orbits (that are all hyperbolic) and one
singularity.  The examples in \cite{GW} are known as the geometric Lorenz attractors.

The Lorenz attractor is an example of a robustly non-hyperbolic star flow. This implies that the result in \cite{H} 
no longer holds for singular flows. In three dimensions, the difficulties introduced by the robust coexistence
of singularities and periodic orbits are almost fully understood. In particular, Morales et al., (see \cite{MPP} and \cite{GLW}) defined the notion of \emph{singular hyperbolicity}, which requires some compatibility
between the hyperbolicity of the singularity and that of the regular orbits. They prove that,
\begin{itemize}
 \item For  $C^1$-generic star flows on three-manifolds,
every chain recurrence class is singular hyperbolic.
\item Any robustly transitive set containing a singular point of a flow on a three-manifold is either a
singular hyperbolic attractor or a singular hyperbolic repeller.
\end{itemize}

It was conjectured in \cite{GWZ} that the same result holds without the generic assumption, that is, that  \emph{for any star flows on three-manifolds, every chain recurrence class is singular hyperbolic.} However, \cite{BaMo} built a star flow on a three-manifold demonstrating a chain recurrence class that is not singular hyperbolic, contradicting the conjecture.
This example exhibits two singularities of different indexes (dimension of the stable manifold) in the same chain
recurrence class. But the definition of singular hyperbolicity forces all singularities contained in a singular hyperbolic set
to exhibit the same index. However, these chain recurrence classes brake under perturbations, separating the
singularities of different indexes into different chain recurrence classes.

Several  generalizations to higher dimensions of results in  \cite{MPP} were attempted. The most general, using a direct generalization of singular hyperbolicity, is the result in \cite{GSW}. Here the authors proved that  an open and dense
set of star flows exist such that these star flows are singular hyperbolic if the singularities in the same chain recurrence class have the same index. However, the condition on the singularities, is not generic in higher dimensions. Open examples of star flows exist that do not satisfy it (see \cite{dL}). This made  a generalization of \cite{MPP} for an open and dense set in dimensions higher than four impossible.

In \cite{BdL}, the authors give a necessary and sufficient condition for a generic flow to be a star flow using a novel partial hyperbolicity called\emph{ multi-singular hyperbolicity}. This new hyperbolic structure is
compatible with singularities with different indexes, and the examples in \cite{BaMo} and \cite{BdL} are  multi-singular hyperbolic.
The following question appears in \cite{BdL} and \cite{CdLYZ}: 

\vskip 2mm
\emph{Question. Do all star flows  satisfy multi-singular hyperbolicity?}
\vskip 2mm


The first result gives a positive answer in three dimensions.
\begin{theorem}\label{T.main1}
Let $X$ be a star flow of a three-dimensional manifold $M$. Then, $X$ is multi-singular hyperbolic. 
\end{theorem}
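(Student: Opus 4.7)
The plan is to reduce Theorem~\ref{T.main1} to the general theorem of the paper, which asserts that any vector field whose periodic orbits are robustly of a common Morse index is multi-singular hyperbolic. Accordingly, the reduction has two parts: verify the common-index hypothesis for three-dimensional star flows, and then invoke the general theorem.

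For the verification, fix a chain recurrence class $C$ of the star flow $X$. Because $X$ is star, every periodic orbit of $X$ and of any $C^1$-small perturbation is hyperbolic, so its Morse index is well-defined and persistent; in dimension three the only nontrivial saddle indices are $1$ and $2$. Suppose for contradiction that $C$ contained saddle periodic orbits of both indices. Repeated application of Hayashi's $C^1$-connecting lemma, combined with Ma\~n\'e's ergodic closing lemma to deal with accumulation by singularities, would yield a $C^1$-perturbation $Y$ of $X$ exhibiting a heteroclinic cycle between two such orbits; a further perturbation inside this cycle would then produce a non-hyperbolic periodic orbit, contradicting the $C^1$-openness of the star condition. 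Applying the same argument to every vector field in a $C^1$-neighborhood of $X$ yields the sought robustness. Hence $X$ satisfies the common-index hypothesis of the main theorem, and the latter delivers multi-singular hyperbolicity.

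The main obstacle lies in the case where $C$ contains singularities of different indices, as in the example of \cite{BaMo}. Classical singular hyperbolicity is incompatible with such classes because it forces a uniform stable dimension along $C$. The multi-singular framework introduced in \cite{BdL} was designed precisely for this situation: one works with the rescaled linear Poincar\'e flow and inserts cocycles of the form $\|X\|^{\alpha}$ whose exponents $\alpha$ are adapted to the local index of each singularity. The technical core of the general theorem, underlying the reduction above, is to show that on the chain recurrence class these rescalings can be chosen consistently along the connecting orbits between singularities of distinct indices, so that the dominated splitting produced by the star condition (of dimensions $(1,2)$ or $(2,1)$ in dimension three) becomes uniformly hyperbolic in the rescaled sense. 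This coherence step, rather than the reduction itself, is where the real work concentrates.
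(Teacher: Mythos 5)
Your top-level reduction is the same as the paper's: Theorem~\ref{T.main1} is obtained from Theorem~\ref{T.main4} by verifying its hypothesis for three-dimensional star flows (this is Lemma~\ref{l.threehom}). However, your verification step has a genuine gap, on two counts. First, the hypothesis is misstated: Theorem~\ref{T.main4} does not require all critical elements to share the same Morse index, but that $X$ be \emph{homogeneous}, i.e.\ that all critical elements of \emph{all} vector fields $Y$ in a $C^1$-neighborhood $\cU$ of $X$, lying in a fixed neighborhood $U$ of the class, have the same \emph{periodic} index, where the periodic index of a singularity is $s$ or $s-1$ according to the sign of its saddle value. In particular, singularities of different stable indices may perfectly well coexist in one class of a $3$-dimensional star flow (this is exactly the situation of \cite{BaMo} that the multi-singular framework was built for), so no contradiction can be extracted from ``two critical elements of different index in $C$'' by connecting-lemma cycles; what must be proved instead is that every singularity of a nontrivial class is Lorenz-like (Corollary~\ref{c.lorenzlike}, via Lemmas 4.2 and 4.7 of \cite{GSW}), which in dimension three forces its periodic index to equal $1$, the index of the saddle periodic orbits.

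Second, for periodic orbits your dichotomy is wrong in dimension three: an orbit whose normal stable bundle is two-dimensional is a sink, not a ``saddle of index $2$''; the only saddle index is $1$. The actual danger to homogeneity is that sinks or sources of perturbed vector fields $Y_n\to X$ accumulate in the Hausdorff sense on a part of the class, and your heteroclinic-cycle argument cannot address this: a sink has no unstable manifold with which to close a cycle, and your argument is in any case confined to orbits of $X$ inside $C$, whereas homogeneity is a statement about critical elements of every nearby $Y$ in a fixed $U$. The paper excludes this scenario by a different mechanism: Lemma~\ref{GY} (Gan--Yang) shows the Hausdorff limit $S$ of such sinks contains no singularity, hence $S\subsetneq C$; then the weak-$*$ limit of the ergodic measures carried by the sinks, combined with Proposition~\ref{Claim2} and Theorem~\ref{5.6}, yields a measure-theoretic contradiction. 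Without an argument of this type (or a substitute), the reduction to Theorem~\ref{T.main4} is not justified, so the proposal as written does not establish Theorem~\ref{T.main1}.
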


The second result characterizes the singular hyperbolic star flows in three dimensions as follows:

\begin{theorem}\label{T.main2} 
Let $X$ be a star flow of a three-dimensional manifold $M$. If any chain recurrent class of $X$ has all its singularities with the same index, $X$ is singular hyperbolic. 
\end{theorem}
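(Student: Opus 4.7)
The plan is to deduce Theorem \ref{T.main2} directly from Theorem \ref{T.main1}, using the compatibility between multi-singular and singular hyperbolicity when all singularities in a class share the same index. By Theorem \ref{T.main1} the flow $X$ is already multi-singular hyperbolic on its chain recurrent set, so it suffices to check, class by class, that the index-coincidence hypothesis upgrades this to singular hyperbolicity.

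Fix a chain recurrence class $C$. In a three-manifold a hyperbolic singularity has index $0$, $1$, $2$ or $3$; sinks (index $3$) and sources (index $0$) form isolated chain classes that are trivially singular hyperbolic. If $C$ contains no singularity, the star property together with Ma\~n\'e--Liao type arguments give that $C$ is uniformly hyperbolic, which is a particular case of singular hyperbolicity. The remaining situation is when $C$ contains at least one singularity; by hypothesis all singularities of $C$ have a common index, which after reversing time if necessary we may take to be $2$ (Lorenz-like).

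In this case the local cocycle rescalings used to define multi-singular hyperbolicity are all of the same \emph{attracting} type, so they can be amalgamated into a single reparametrization valid on $C$. This allows the linear Poincar\'e splitting supplied by Theorem \ref{T.main1} to extend across the singularities to a genuine tangent bundle splitting $E^{ss}\oplus E^{cu}$ on $C$ with $\dim E^{ss}=1$, $\dim E^{cu}=2$, and with the flow direction contained in $E^{cu}$. Uniform contraction of $E^{ss}$ and its domination over $E^{cu}$ transfer immediately from the multi-singular hyperbolic data.

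The main obstacle is verifying area expansion of $E^{cu}$. Away from the singularities it follows from the hyperbolicity of the rescaled cocycle; at the singularities of $C$ it is built in because the index is $2$, so the singularity is itself area-expanding on its two-dimensional center-unstable plane, and no singularity of the opposite index is present to spoil the estimate along nearby regular orbits. I would close the argument along the lines of \cite{MPP} and \cite{GLW}: the star condition, the dominated splitting obtained above, and the uniform index of the singularities in $C$ together allow a Pliss-type selection yielding uniform area expansion of $E^{cu}$. The delicate point will be matching the central determinant estimates on the regular part with those on the local stable manifolds of the singularities, exploiting crucially that no $\mathrm{index}\,1$ singularity is present to create a competing direction of contraction in $E^{cu}$. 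Combining everything, $C$ is singular hyperbolic, and since $C$ was arbitrary, so is $X$.
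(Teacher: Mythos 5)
Your overall route is exactly the paper's: apply Theorem~\ref{T.main1} to get multi-singular hyperbolicity, then upgrade to singular hyperbolicity on each class whose singularities share an index. The paper closes the second step in one line by citing Remark~\ref{singhip} (Remark 17 in \cite{BdL}): a chain recurrence class of a multi-singular hyperbolic vector field all of whose singularities have the same index is singular hyperbolic. Your preliminary case analysis (trivial classes, nonsingular classes, reduction to index-$2$ Lorenz-like singularities after reversing time) is correct but unnecessary once that remark is available, since it applies to every class, vacuously so for the nonsingular ones.

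The difference is that you try to re-derive the upgrade by hand, and that is where your argument is not actually complete. The assertions that the local cocycles ``can be amalgamated into a single reparametrization,'' that the normal-bundle splitting over $B(X,\Lambda)$ ``extends across the singularities to a genuine tangent bundle splitting $E^{ss}\oplus E^{cu}$,'' and that contraction and domination ``transfer immediately,'' together with the closing step that you only describe as a Pliss-type selection along the lines of \cite{MPP} and \cite{GLW}, constitute precisely the nontrivial content of Remark 17 in \cite{BdL}: passing from hyperbolicity of the rescaled extended linear Poincar\'e flow on the normal bundle to a splitting of $T_CM$ with uniform sectional (area) expansion of $E^{cu}$ through the singularities is the delicate part, and you explicitly leave it as a plan rather than an argument. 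So, taken as a self-contained proof, there is a gap at exactly that point; taken as the paper takes it, the gap is filled by citing Remark~\ref{singhip}, and the rest of your write-up then collapses to the paper's two-line deduction from Theorem~\ref{T.main1}.
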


Finally, the last result contributes to our understanding of this question in higher dimensions: 

\begin{theorem}\label{T.main3} Let $X$ be a star flow of a  manifold $M$ and $C$ be a chain recurrence class of $X$. If $C$ is robustly chain transitive, then $X$ is multi-singular hyperbolic on $C$. 
\end{theorem}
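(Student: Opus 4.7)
The plan is to reduce Theorem \ref{T.main3} to the (local form of the) main technical result of the paper, which asserts that a star vector field whose periodic orbits are robustly of the same index is multi-singular hyperbolic. Given such a local version applied to a filtrating neighbourhood of $C$, the entire task becomes showing that the robust chain transitivity of $C$, together with the star condition, forces the stable index to be the same for all periodic orbits in $C$ and in the continuation $C_Y$ of $C$ for every $Y$ sufficiently $C^1$-close to $X$.

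To establish this uniformity of index, I would argue by contradiction. Suppose there exist an arbitrarily $C^1$-close perturbation $Y$ of $X$ and two hyperbolic periodic orbits $\gamma_1,\gamma_2\subset C_Y$ of different indices. Since $C_Y$ is chain transitive, the flow version of the Bonatti--Crovisier connecting lemma produces a further small perturbation $Z$ for which the continuations of $\gamma_1,\gamma_2$ are joined into a heterodimensional cycle contained in $C_Z$. Unfolding this cycle along the lines of Bonatti--Diaz (adapted to vector fields, as in the work of da Luz and of Gan--Yang) produces, after one more $C^1$-small perturbation, a periodic orbit carrying a non-hyperbolic Floquet multiplier. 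This contradicts the star hypothesis on $X$, and hence all periodic orbits in $C_Y$ share a common index.

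Once the robust uniformity of the index is secured in a neighbourhood of $C$, I would invoke the main theorem on a small filtrating neighbourhood $U$ of $C$. The construction of the dominated splitting of the extended linear Poincar\'e flow, together with the reparametrisations (or cocycle weights) that convert the singular directions into uniformly hyperbolic ones, depends only on orbits lying in the maximal invariant set in $U$. Consequently, the conclusion localises and yields multi-singular hyperbolicity of $X$ on $C$.

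The main obstacle I foresee is the heterodimensional unfolding step in arbitrary dimension: inside the world of star flows one must guarantee that the unfolding actually creates a non-hyperbolic periodic orbit before the cycle is destroyed, and this requires delicate control of the central eigenvalues and of the transverse splitting. A secondary difficulty is of a more bookkeeping nature: the chain recurrence class must be tracked through all successive perturbations using upper semi-continuity of chain recurrence classes and filtrating neighbourhoods provided by the robust chain transitivity hypothesis, so that the cycles we construct really lie inside $C_Z$ and the contradiction with the star condition is genuine.
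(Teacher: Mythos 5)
Your overall reduction coincides with the paper's: verify that $X$ is homogeneous in a neighbourhood of $C$ and then apply Theorem~\ref{T.main4}. The paper obtains homogeneity by quoting the restated Lemma~\ref{4.5} (Lemma 4.5 and Theorem 5.7 of \cite{GSW}), observing that robust chain transitivity gives exactly its hypothesis that the classes $C(p)=C(q)=C$ of critical elements do not break under perturbation; you propose instead to re-derive the index uniformity from scratch, via the connecting lemma and the unfolding of a heterodimensional cycle against the star condition.

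There are two genuine gaps. First, and most importantly, the hypothesis of Theorem~\ref{T.main4} is homogeneity, which is a condition on \emph{all} critical elements, singularities included: the periodic index $Ind_p(\sigma)$, determined by the sign of the saddle value, must agree robustly with the common index of the periodic orbits in $U$. Your argument only treats periodic orbits and never explains why a singularity $\sigma\in C$ satisfies $Ind(\sigma)=Ind(q)$ when $sv(\sigma)<0$ and $Ind(\sigma)=Ind(q)+1$ when $sv(\sigma)>0$; this is precisely the nontrivial second half of Lemma~\ref{4.5} (it rests on the Lorenz-like structure of the singularities, Corollary~\ref{c.lorenzlike}, and the ergodic-measure analysis behind Theorem~\ref{5.6}), and without it the homogeneity hypothesis is not verified and Theorem~\ref{T.main4} cannot be invoked. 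Second, the step you yourself flag as the main obstacle --- that unfolding a heterodimensional cycle of arbitrary co-index inside a star flow creates a non-hyperbolic periodic orbit before the cycle is destroyed --- is not supplied; carrying it out requires Liao--Ma\~n\'e type uniform estimates at the period (Lemma~\ref{l.uniformlyattheperiod}) together with a Franks-type perturbation along orbits shadowing the cycle, and this is essentially the content of the proof in \cite{GSW} that the paper deliberately cites rather than reproduces. As written, the proposal replaces the quoted lemma by a sketch of its hardest half and omits its singular half altogether.
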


This paper is organized as follows. Section~\ref{S.BasicDef} defines all objects and  provides the results needed
throughout this article. Section~\ref{SecNewVOT} proves that a vector field with all robustly periodic orbits of the same index is multi-singular hyperbolic, see Theorem \ref{T.main4}.
As  a consequence of Theorem \ref{T.main4},
Section~\ref{4} produces Theorems \ref{T.main1}, \ref{T.main2} and \ref{T.main3}.


\section{Basic definitions and preliminaries}\label{S.BasicDef}
Here, we formally define all objects that were informally discussed in the
introduction along with some other notions that will later be necessary.

Throughout this paper, $M$ is a compact boundaryless Riemannian manifold of dimension $d$. In section \ref{4}, we will deal
with three-dimensional manifolds in some results, but in those cases, it will be specified.
We will work with vector fields in $\cC^1$ topology and denote as $\varphi^t_X$ the flow associated with $X$.
We denote
the following sets of points of a vector field $X$: The set of singularities,  periodic orbits and critical elements,  $Sing(X)$, $Per(X)$ and  $Crit(X)=Sing(X)\cup Per(X)$ respectively.
Given an open set $U$, the maximal invariant set in $U$ is $$\Lambda_X=\bigcap_{t\in\RR}\varphi^t(U)\,.$$ In this article, we will focus on sets that exhibit some recurrence, called chain recurrence classes. For any vector
field $X$ of a manifold $M$, we can define the following notions that arise from Conley theory
 (see \cite{AN} and \cite{Co})
\begin{defi} Let $X$ be a  vector field of  $M$ and let $\varphi^t$ be its flow,
\begin{itemize}
\item We say that a pair of sequences $\set{x_i}_{0\leq i\leq k}$ and  $\set{t_i}_{0\leq i\leq k-1}$, $k\geq 1$, is an \emph{ $\varepsilon$-pseudo orbit from $x_0$ to $x_k$} for a flow $\varphi^t$,
  if for every $0\leq i \leq k-1$ one has
  $$ t_i-t_{i-1}\geq 1 \mbox{ and }d(x_{i+1},\varphi^{t_i}(x_i))<\varepsilon.$$
 \item A compact invariant set $\Lambda$ is called \emph{chain transitive} if  for any $\varepsilon > 0$, and any $x, y \in\Lambda$,
there is an $\varepsilon$-pseudo orbit from $x$ to $y$.  
\item A maximal invariant set $\Lambda_X$ is called \emph{robustly chain transitive} if there exists a compact neighborhood $U$ of $\Lambda_X$ and a $C^1$ neighborhood $\mathcal{U}$ of $X$  such that for all $Y\in\cU$, the maximal invariant set $\Lambda_Y$ in $U$ is the unique chain transitive set contained in $\text{Int}(U)$.
\item  We say that $x\in M$ is \emph{chain recurrent} if $\forall \, \varepsilon>0$, there is a nontrivial $\varepsilon$-pseudo orbit from $x$ to $x$. The set of chain recurrent points, is called the\emph{ chain recurrent set}
and is denoted by $\mathfrak{R}(M)$.
 \item We say that   $x, y \in \mathfrak{R}(M)$ are chain related if,  $\forall \,\varepsilon>0$, there are $\varepsilon$-pseudo orbits from $x$ to $y$ and from $y$ to $x$. This is an equivalence relation.
 The equivalence classes of this equivalence relation are called \emph{ chain recurrence classes}.
 \item A chain recurrence class $C$ is \emph{robustly chain transitive} if there exists a  neighborhood $U$ of $C$ in which the maximal invariant set $\Lambda_X$ in $U$ is such that $\Lambda_X=C$, and $\Lambda_X$ is robustly chain transitive.
\end{itemize}
\end{defi}
Now, we discuss the classical notions of hyperbolicity and the weaker versions. 
\begin{defi}
For $X\in \mathcal{X}^1(M)$ and a compact invariant set $\Lambda$ of $X$, we define the following notions.

\begin{itemize}
    \item $\Lambda$ is \textit{hyperbolic} if there are two constants $K\geq1$ and $\lambda>0$ and a continuous $D\varphi^t$-invariant splitting
$$T_{\Lambda}M=E^s\oplus <X(x)>\oplus E^u$$
such that for every $x\in\Lambda$ and $t\geq0,$ we have
$$||D\varphi^t|_{E^s(x)}||\leq K e^{-\lambda t},$$
$$||D\varphi^{-t}|_{E^u(x)}||\leq K e^{-\lambda t}.$$
Here $<X(x)>$ denotes the space spanned by $X(x)$, which is $0$-dimensional if $x$ is a singularity, or $1$-dimensional if $x$ is a regular point. 
\item $\Lambda$ has \textit{dominated splitting} of $X$, where $E$ is dominated by $F$ if there are constants $K\geq1$ and $\lambda>0$ and a continuous $D\varphi^t$-invariant splitting
$$T_{\Lambda}M=E\oplus_{\prec} F$$
such that for all $x\in \Lambda$ and $t\geq0$, we have
$$||D\varphi^t|_{E(x)}||\cdot||D\varphi^{-t}|_{F(\varphi^t(x))}||\leq Ke^{-\lambda t}.$$ 

\end{itemize}
\end{defi}

 The following definition is central to this article.
\begin{defi}
For $X\in \mathcal{X}^1(M)$, $X$ is a \emph{star vector field} and its flow is a \emph{star flow} if there exists a $C^1$ neighborhood $\mathcal{U}$ of $X$ such that every critical element (singularities or periodic orbits) of every $Y\in\mathcal{U}$ is hyperbolic.
\end{defi}
 The following definition was given in \cite{MPP} in dimension three and generalized by several authors, see for instance \cite{GSW}.
\begin{defi}
For $X\in \mathcal{X}^1(M)$ and a compact invariant set $\Lambda$ of $X$.
\begin{itemize}
\item $\Lambda$ is \emph{positively singular hyperbolic} for $X$ if there are constants $K\geq1$ and $\lambda>0$ and a continuous $D\varphi^t$-invariant splitting
$$T_{\Lambda}M=E^{ss}\oplus E^{cu}$$
such that for all $x\in \Lambda$ and $t\geq0$, the following conditions are satisfied
\begin{itemize}
    \item $E^{ss}$ is dominated by $E^{cu}$.
    \item $E^{ss}$ is uniformly contracting, i.e., $||D\varphi^t|_{E^{ss}(x)}||\leq K e^{-\lambda t}.$
    \item $E^{cu}$ is sectionally expanding, i.e., for any $2$ dimensional subspace $L\in E^{cu}(x)$, we have
    $$|det(D\varphi^t|_L)|\geq K^{-1}e^{\lambda t}.$$
\end{itemize}
\item We say that $\Lambda$ is \emph{negatively singular hyperbolic} for $X$ if $\Lambda$ is positively singular hyperbolic of $-X$. 
\item $\Lambda$ is \textit{singular hyperbolic} for $X$ if it is either: positively  or negatively singular hyperbolic of $X$, or a disjoint union of  positively and negatively singular hyperbolic sets of $X$.
\end{itemize}
\end{defi}

Multi-singular hyperbolicity  is a hyperbolic structure  that is related to the normal bundle and the extended linear Poincaré flow as originally  defined in \cite{GLW}.

To define multi-singular hyperbolicity we will need some preparations.  
First, we define the linear Poincaré flow as follows:

Let $X\in \cX^1(M)$ and $\varphi^t: M \rightarrow M$ its flow.
The normal space  of $X$ at $x$ is 
$$N_x=\{v \in  T_xM: v \perp X(x) \} .$$
Given $x$ a regular point and  $v \in N_x$, the differential of the flow induces a linear flow on $N=\bigcup_{x \in M \setminus Sing(X)} N_x ,$
 called the \emph{linear Poincaré flow}, is defined as follows:
$$\psi_t(v)=D\varphi_t(v)- \frac{\left \langle D\varphi_t(v), X(\varphi_t(x)) \right \rangle}{\left \|  X(\varphi_t(x))\right \|} X(\varphi_t(x)) ,$$
where $\left \langle.,. \right \rangle$ is the inner product on $T_xM$ given by the Riemannian metric.\\

To define the extended linear Poincaré flow, first, we define a set of directions over the singularities. 

\begin{defi}Let $M$ be a  $d$-dimensional manifold.
 \begin{itemize}
 \item We call \emph{the projective tangent bundle of $M$}  the fiber bundle  $\Pi_\PP\colon \PP M\to M$  whose fiber $\PP_x$ is
 the projective space of the tangent space $T_xM$: In other words, a point $L_x\in \PP_x$ is a $1$-dimensional vector subspace of $T_xM$.
 
  \item We call the \emph{normal bundle of $\PP M $}  the $(d-1)$-dimensional vector bundle over $\PP M$ $\Pi_\cN\colon \cN \to \PP M$, whose fiber $\cN_{L}$ over
  $L\in \PP_x$
  is the quotient space $T_x M/L$.

  If we endow $M$ with a Riemannian metric, $\cN_L$  is identified with the orthogonal hyperplane of $L$ in $T_xM$.
 \end{itemize}
 \end{defi}
 \begin{defi}
 Let $U$ be an open set of $M$, and $\Lambda_X\subset U$ be a maximal invariant set in $U$ for $\varphi^t$. Let $\sigma\in \Lambda_X\cap Sing(X)$ be a hyperbolic singularity and consider the finest dominated  splitting for $(D\varphi_t)_{t\in\RR}$:
 $$T_\sigma M=E^s_k\oplus_{\prec} \cdots\oplus_\prec E^s_1\oplus_\prec E^u_1\oplus_\prec\cdots\oplus_\prec  E^u_l.$$
 \begin{itemize}
\item Let $i$ be the smallest integer such that the strong stable manifold of $\sigma$ tangent to $E^s_k\oplus\cdots\oplus E^s_i$ intersects $\Lambda_X$
 only at $\sigma$. The space $E^{ss}_{\sigma,\Lambda_X}:=E^s_k\oplus\cdots\oplus E^s_i$ is called \emph{escaping stable space of $\sigma$ in $\Lambda_X$}.
\item  Analogously, we define the \emph{escaping unstable space of $\sigma$ in $\Lambda_X$}, and  denote it as $E^{uu}_{\sigma, \Lambda_X}:=E^s_j\oplus\cdots\oplus E^s_l$. 
\item The \emph{center space} at $\sigma$ is defined as 
 $E^c_{\sigma,\Lambda_X}=E^s_{i-1}\oplus \cdots \oplus E^s_1\oplus E^u_1\oplus \cdots\oplus E^u_{j-1}.$
 \item We denote by $\PP^c_{\sigma,\Lambda_X}$, the projective space of the center space $E^c_{\sigma,\Lambda_X}$. 
 \end{itemize}
 
\end{defi}
 Similar definitions can be made for singularities in other interesting sets (e.g., compact invariant set) and a compact neighborhood $U$. Here, an escaping stable space is a strong stable space such that all orbits (except the singularity) intersect the complement of $U$.
 
 \begin{defi}
 	Let $X$ be a vector field in $\cX^1(M)$ and $\Lambda$ be a compact invariant  set such that its singularities are hyperbolic, we define:
 	\begin{itemize}
 	    \item The lift function $S:M\setminus Sing(X)\to\PP M$  by $$S(x)= <X(x)>$$
 	    \item Then, the lift of $\Lambda$ is given by,
 	    $$\widetilde{\Lambda}=\overline{\big\{<X(x)>: x\in \Lambda\setminus Sing(X) \big\}}$$ or equivalently $\widetilde{\Lambda}=\overline{S(\Lambda\setminus Sing(X))}$
 	    \end{itemize}
 \end{defi}

  \begin{defi}
  \begin{itemize}
  	Let $X$ be a vector field in $\cX^1(M)$. Let $U\subset M$ be an open set and  $\Lambda$ be the maximal invariant  set in $U$ such that its singularities are hyperbolic.
 	    \item The \emph{extended invariant set of $\Lambda$} is the compact subset of $\PP M$ defined by
 	$$B(X,\Lambda)=\widetilde{\Lambda}\cup\bigcup_{\sigma\in Sing(X)\cap \Lambda} \PP^c_{\sigma,\Lambda}.$$
 	 \item 	Conley theory asserts that  any chain recurrent  class $C$ admits a basis of neighborhoods which are nested filtrating neighborhoods $U_{n+1}\subset U_n$, $C=\bigcap U_n$. Let us consider the maximal invariant sets $\Lambda_n\subset U_n$,
 we define
$$B(X,C)=\bigcap_n\widetilde{B(X,\Lambda_n)}.$$
\end{itemize}
\end{defi}
 In the same manner, we can define the notion of $B(X,K,U)$ for a compact invariant set $K$ and a compact neighborhood $U$, as the lift of $K$ and the center space of $U$.

 \begin{defi} For $X$ a vector field in $\cX^1(M)$ and let $\Lambda$  be a compact invariant  set.

 \begin{itemize}
  \item Let  $\varphi_{\PP}^t\colon\PP M\to \PP M$ be the flow defined by $$\varphi_{\PP}^t(L_x)= D\varphi_x^t(L_x)\in \PP_{\varphi^t(x)}M.$$
 \item Let $\psi_{\cN}^t\colon\cN\to \cN$ be the extended linear Poincaré flow whose restriction to a fiber $\cN_{L_x}$, $L_x\in \PP_x M$,
  is the linear isomorphism onto
  $\cN_{\varphi^t_{\PP}(L_x)}$ defined as follows: $D\varphi^t_x$ is a linear isomorphism from $T_xM$ to $T_{\varphi^t(x)}M$, which maps the line $L_x\subset T_xM$
  onto the line $\varphi^t_{\PP}(L_x)\subset T_{\varphi^t(x)};$.  Therefore, it passes to the quotient as the announced linear isomorphism.
  $$\begin{array}[c]{ccc}
T_xM &\stackrel{D\varphi^t_x}{\longrightarrow}&T_{\varphi^t(x)}M\\
\downarrow&&\downarrow\\
\cN_{L_x}&\stackrel{\psi^t_{\cN}}{\longrightarrow}&\cN_{\varphi^t_{\PP}(L_x)}
\end{array}$$
Suppose $M$ is endowed with a Riemannian metric, then  $\cN_{L_x}$ is identified with the orthogonal hyperplane of $L_x \in T_xM$.
 \end{itemize}
\end{defi}

However, looking at the extended linear Poincare flow, makes us lose some of the information about expansion
rates along the flow direction, that is unimportant away from singularities but, in our case, plays a crucial
role in this study. To recover this information, we  need a multiplicative cocycle, as the ones  defined in\cite{BdL}.

 Let $C$ be a chain recurrence class of $X$. For every singularity, $\sigma\in Sing (X) \cap C$, we consider a neighborhood $U_{\sigma}$ of $\sigma$, such that $\{\sigma\} $ is the maximal invariant set in it. We consider a Riemannian metric
 $\| \cdot \|$ as well, such that  $\|(D_x\varphi^t)|_{L_x}\|=1$  in the complement of $\bigcup_{\sigma\in  Sing (X)} U_{\sigma}$.
\begin{coro}[Corollary 1 in \cite{BdL}]
For all $\sigma\in Sing (X)$, there exists a multiplicative cocycle $$h_\sigma: B(X,\Lambda)\times \RR\to \RR$$  such that  if $x$ and $\varphi^t(x)$
are in $U_{\sigma}$, then $ h^t_{\sigma}(L_x)= \|(D_x\varphi^t)|_{L_x}\|$ and if $x$ and $\varphi^t(x)$ are not in $U_{\sigma}$, then    $ h^t_\sigma(L_x)=1$.
\end{coro}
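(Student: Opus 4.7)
\medskip
\noindent\textbf{Proof plan.} The plan is to construct $h_\sigma$ as a multiplicative coboundary modification of the natural cocycle $(L_x,t)\mapsto\|D\varphi^t|_{L_x}\|$ over the projective flow $\varphi_\PP^t$, which is well defined at a singularity $\sigma$ via the linearized flow $\exp(t\,DX(\sigma))$ acting on $\PP T_\sigma M$ and in particular on $\PP^c_{\sigma,\Lambda}$. First, I would shrink the neighborhoods so that the $U_\tau$, for $\tau\in Sing(X)$, are pairwise disjoint and each one is an isolating block of $\tau$, that is, $x,\varphi^t(x)\in U_\tau$ implies $\varphi^s(x)\in U_\tau$ for every $s\in[0,t]$; such blocks exist because every $\tau$ is hyperbolic and may be taken arbitrarily small. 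With the metric already fixed so that $\|X\|\equiv 1$ off $\bigcup_\tau U_\tau$, one gets $\|D\varphi^t|_{\langle X(x)\rangle}\|=1$ along any orbit segment avoiding all $U_\tau$.

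Next, I would choose a continuous function $g_\sigma\colon\PP M\to\RR_{>0}$ equal to a constant $c_1$ on the lift of a smaller block $V_\sigma\subset U_\sigma$ (including the whole central fibre $\PP^c_{\sigma,\Lambda}$), equal to a second constant $c_2$ on the lift of the complement of $U_\sigma$, with a continuous transition in the shell $U_\sigma\setminus V_\sigma$; then I would set
$$h_\sigma^t(L_x):=\|D\varphi^t|_{L_x}\|\cdot\frac{g_\sigma(\varphi_\PP^t(L_x))}{g_\sigma(L_x)}.$$
The multiplicative cocycle property is automatic since this is the product of a cocycle by a coboundary. After relabelling $V_\sigma$ as the $U_\sigma$ of the statement, the first prescribed value $h_\sigma^t(L_x)=\|D\varphi^t|_{L_x}\|$ on $U_\sigma$ follows from the isolating-block property: the whole orbit segment stays in the region where $g_\sigma\equiv c_1$, so the correction factor is trivial.

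The subtle part is the condition $h_\sigma^t(L_x)=1$ when $x$ and $\varphi^t(x)$ lie outside $U_\sigma$. Then $g_\sigma(L_x)=g_\sigma(\varphi_\PP^t(L_x))=c_2$, and for $L_x=\langle X(x)\rangle$ one has $\|D\varphi^t|_{L_x}\|=\|X(\varphi^t(x))\|/\|X(x)\|$, which the metric's normalization makes equal to $1$ only if both endpoints sit in the region where $\|X\|\equiv 1$. The obstruction is that the orbit segment may transit through another neighborhood $U_\tau$, $\tau\neq\sigma$, where $\|X\|$ is not constant and the ratio can be arbitrarily large. The resolution, which is the main technical step, is to implement the construction simultaneously for every $\tau\in Sing(X)\cap\Lambda$ through a compatible family of functions $g_\tau$ so that $\|D\varphi^t|_{L_x}\|=\prod_\tau h_\tau^t(L_x)$; once this factorization is set up, the contribution of each transit through $U_\tau$ with $\tau\neq\sigma$ is absorbed by $h_\tau$, and the remaining cocycle $h_\sigma$ meets the required normalization outside $U_\sigma$.
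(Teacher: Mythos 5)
Two remarks before the main point: the paper itself does not prove this statement (it is quoted as Corollary~1 of \cite{BdL}), so your proposal must stand on its own; and the auxiliary ``isolating block'' property you invoke is false in the relevant situations. If $\sigma$ lies in a nontrivial chain recurrence class (e.g.\ the Lorenz attractor), regular recurrent orbits leave and re-enter \emph{every} neighborhood of $\sigma$ infinitely often, so no neighborhood satisfies ``$x,\varphi^t(x)\in U_\sigma \Rightarrow \varphi^s(x)\in U_\sigma$ for all $s\in[0,t]$''. This particular error is harmless for your first clause, because a coboundary only sees the endpoint values of $g_\sigma$: $h^t_\sigma(L_x)=\|D\varphi^t|_{L_x}\|$ holds whenever both endpoints lie in the region where $g_\sigma\equiv c_1$, excursions or not.

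The genuine gap is the clause $h^t_\sigma\equiv 1$ outside $U_\sigma$, and it cannot be met by your ansatz. Any cocycle of the form $\|D\varphi^t|_{L}\|\cdot g_\sigma(\varphi_{\PP}^t(L))/g_\sigma(L)$ with $g_\sigma\colon\PP M\to\RR_{>0}$ continuous differs from the derivative cocycle by a factor bounded in $[\min g_\sigma/\max g_\sigma,\ \max g_\sigma/\min g_\sigma]$. Now take a second singularity $\tau\in\Lambda$, $\tau\neq\sigma$, and a direction $L\in\PP^c_{\tau,\Lambda}\subset B(X,\Lambda)$ with nonzero Lyapunov exponent (all exponents of the hyperbolic $\tau$ are nonzero). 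The base point stays at $\tau\notin U_\sigma$ for all time, so the statement forces $h^t_\sigma(L)\equiv 1$, while $\|D\varphi^t|_{L}\|$ grows or decays exponentially; a uniformly bounded correction cannot cancel this. (A related failure already occurs on lifted regular orbits inside $U_\tau$, where matching the clause would force $g_\sigma$ to behave like $1/\|X\|$ and blow up at $\tau$.) Note also that the obstruction you single out — the transit of the orbit through $U_\tau$ — is not the real one, since a coboundary depends only on endpoint values; and your proposed repair, a ``compatible family'' $g_\tau$ with $\|D\varphi^t|_{L}\|=\prod_\tau h^t_\tau(L)$, either keeps each $h_\tau$ cohomologous to the full derivative cocycle (so each factor suffers the same contradiction over the other singularities) or abandons the coboundary form, in which case no construction has been given. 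The construction in \cite{BdL} is genuinely different: $h_\sigma$ is produced by a rescaling/localization supported near $\sigma$ alone, exploiting that over $B(X,\Lambda)$ the only directions away from the singular neighborhoods are flow directions, which the adapted metric makes neutral, so the expansion data of the other singularities never enters $h_\sigma$ at all rather than being ``absorbed'' a posteriori.
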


\begin{defi}
For $X \in \mathcal{X}^1(M)$ and $\Lambda$, a compact invariant set.
\begin{itemize}
\item  We say that $X$ is  \emph{multi-singular hyperbolic} in $\Lambda$ if there exists an open set  $U$ with $\Lambda \subset U$, such  that
\begin{itemize} 
\item All singularities in $U$ are hyperbolic
\item There is an invariant continuous dominated splitting $\cN=E\oplus_{\prec} F$ for  $\psi^t_\cN$   over $B(X,\Lambda)$
\item There are sets of singularities $ S_-\subset \Lambda \cap Sing (X) $, $ S_+ \subset \Lambda \cap Sing (X) $,  so that the vectors in $E$ are uniformly contracted by the flow $$h_-^t\cdot \psi^t_\cN$$  and the ones in $F$ are uniformly expanded by the flow  
 $$h_+^t\cdot \psi^t_\cN\,.$$
 \end{itemize}
 \item A multi-singular hyperbolic flow is a flow that is multi-singular hyperbolic in all its chain recurrence classes.
\end{itemize}
\end{defi}
Note that all multi-singular hyperbolic flows are star flows.
Now, we
 state some classical results that will be used in the following sections. 
 
 The following Lemma needs this hypothesis:


\emph{(A): For any  $T> 0$, the set of periodic orbits of period $T$ is isolated.}
 
\begin{lemm}\label{ConConLem} \cite{C}
Let $X \in \mathcal{X}^1(M)$ be a vector field that satisfies condition $A$, $\cU$ a neighborhood of $X$ in $\mathcal{X}^1(M)$,  and $C$ a chain recurrence class of $X$. Then, for any $\varepsilon>0$, there exist a vector field $Y_{\varepsilon}\subset \cU$ and a critical element $\gamma_{\varepsilon}$ of $Y_{\varepsilon}$  that is $\varepsilon$ close to $C$ in the Hausdorff topology.
\end{lemm}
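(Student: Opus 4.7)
The plan is to produce a closed $\delta$-pseudo orbit inside $C$ that is $\varepsilon$-dense, and then invoke the $C^1$ connecting lemma for pseudo orbits (whose perturbative form, valid under hypothesis $(A)$, is the content of \cite{C}) to shadow it by a genuine periodic orbit of a vector field $Y_\varepsilon \in \cU$. If $C$ is a single hyperbolic singularity the statement is trivial, so I will assume $C$ carries a nontrivial recurrence.

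First, using the Conley theoretic basis of filtrating neighborhoods of $C$, I fix a filtrating neighborhood $U$ contained in the $\varepsilon/3$-neighborhood of $C$. Then I select a finite $(\varepsilon/3)$-dense family $\{x_1,\dots,x_n\}\subset C$. Chain transitivity of $C$ produces, for any sufficiently small $\delta>0$, $\delta$-pseudo orbit segments from $x_i$ to $x_{i+1}$ (indices modulo $n$) whose concatenation is a closed $\delta$-pseudo orbit $\mathcal{O}_\delta$ entirely contained in $U$ and passing through every $x_i$. The choice of $\delta$ smaller than the distance from $C$ to $\partial U$ guarantees that the pseudo orbit cannot escape $U$.

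Next I apply the connecting lemma for pseudo orbits from \cite{C}: given the neighborhood $\cU$ of $X$, there is $\delta_0=\delta_0(\cU,X)>0$ such that every closed $\delta_0$-pseudo orbit of $X$ can be shadowed, within an arbitrarily small tube, by a genuine periodic orbit of some $Y\in\cU$. Choosing $\delta<\delta_0$ and the tube of width less than $\varepsilon/3$, I obtain a periodic orbit $\gamma_\varepsilon$ of $Y_\varepsilon$ satisfying: every point of $\gamma_\varepsilon$ is within $\varepsilon/3$ of $\mathcal{O}_\delta\subset U\subset N_{\varepsilon/3}(C)$, hence within $2\varepsilon/3$ of $C$; conversely, every point of $C$ is within $\varepsilon/3$ of some $x_i$, and each $x_i$ lies in $\mathcal{O}_\delta$ hence within $\varepsilon/3$ of $\gamma_\varepsilon$, giving total distance at most $2\varepsilon/3<\varepsilon$. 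Together these bounds yield $d_H(\gamma_\varepsilon,C)<\varepsilon$, as required.

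The real difficulty is entirely in the connecting lemma for pseudo orbits itself, a deep perturbative tool that glues many independent local $C^1$ perturbations into a single one supported inside $\cU$. Hypothesis $(A)$ is precisely what is needed to localise these perturbations: it prevents the pathological accumulation of periodic orbits of the same period that would obstruct the simultaneous realisation of the individual local perturbations. Once \cite{C} is cited as a black box, the only independent input of our argument is the construction of the $\varepsilon$-dense closed pseudo orbit inside a filtrating neighborhood of $C$, a direct consequence of chain transitivity together with compactness of $C$.
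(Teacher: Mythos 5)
The paper does not prove this lemma at all: it is quoted directly from Crovisier \cite{C}, with the only ``cleaning'' being that the usual genericity hypothesis is replaced by the explicit condition $(A)$ under which Crovisier's perturbation machinery applies. Your reduction (trivial classes aside, build a closed $\delta$-pseudo orbit that is $\varepsilon/3$-dense in $C$ inside a filtrating neighborhood, then convert it into a genuine periodic orbit of some $Y\in\cU$) is exactly the standard way the cited result is deduced from Crovisier's technical theorem, so your route is consistent with the paper's treatment.

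Two points of precision, though, so that the black box you invoke is the right one. First, the bare connecting lemma for pseudo-orbits (Bonatti--Crovisier \cite{BC}) only produces, after perturbation, an orbit from $x$ to $y$; it gives no control on where that orbit travels, so it cannot by itself yield the two-sided Hausdorff estimate you use when you claim every $x_i$ lies within $\varepsilon/3$ of $\gamma_\varepsilon$. What you actually need is the main theorem of \cite{C}: under condition $(A)$, for every $\cU$ and $\varepsilon$ there is $\delta>0$ such that every \emph{periodic} $\delta$-pseudo orbit is $\varepsilon$-close \emph{in the Hausdorff distance} to a periodic orbit of some $Y\in\cU$. That statement is precisely the hard content of \cite{C} (proved via topological towers), and it is fine to cite it, but it should not be presented as ``the connecting lemma for pseudo orbits.'' Second, your justification that the closed pseudo orbit stays in $U$ (``$\delta$ smaller than the distance from $C$ to $\partial U$'') is not the right reason: the genuine orbit segments between jumps could a priori wander far from $C$. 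The correct argument uses that $U$ is filtrating: for $\delta$ small a $\delta$-pseudo orbit that leaves the attracting (resp.\ repelling) region defining $U$ can never return, so a pseudo orbit joining two points of $C\subset U$ must remain in $U$. With these two corrections your argument is complete and matches the role the lemma plays in the paper.
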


\begin{defi}
Let $x \in M \setminus Sing(X)$. We say that $x$ is strongly closable for $X$, if for any $C^1$ neighborhood $\cU$ of $X$, and any $\delta>0$, there exists $Y \in \cU$, $y \in M$, and $\tau >0$, such that the following items are satisfied:
\begin{itemize}
    \item $\varphi_{\tau}^Y(y)=y$
    \item $d(\varphi_t^X(x), \varphi_t^Y(y))<\delta$, for any $0\leq t \leq \tau$.
\end{itemize}
The set of strongly closable points of $X$ is denote by $\Sigma(X)$.
\end{defi}

The following is the ergodic closing Lemma of Ma\~n\'e.
\begin{lemm}\label{Erg}(\cite{M1}\cite{W1})
For any $X \in \mathcal{X}^1(M)$, $\mu (Sing(X)\cup \Sigma(X))=1$ for every $T>0$ and every $\varphi_X^T$-invariant Borel probability measure $\mu$.
\end{lemm}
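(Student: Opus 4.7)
The plan is to adapt Mañé's original argument \cite{M1}, with Wen's flow-version refinements \cite{W1}: reduce the problem to a careful application of Pugh's $C^1$ Closing Lemma at Birkhoff-generic points. Fix a $\varphi_X^T$-invariant Borel probability $\mu$. Since $Sing(X)$ is flow-invariant and measurable, I may as well assume $\mu(Sing(X))=0$ and then need to show $\mu(\Sigma(X))=1$. I argue by contradiction: suppose the set $B$ of non-singular, non-strongly-closable points satisfies $\mu(B)>0$. By the very definition of $\Sigma(X)$, for each $x\in B$ there exist a $C^1$ neighborhood $\cU_x$ of $X$ and a $\delta_x>0$ such that no $Y\in \cU_x$ admits a periodic orbit $\delta_x$-shadowing the forward orbit of $x$ over its full period. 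Exhausting by a countable basis of $C^1$ neighborhoods and rational $\delta$ isolates a positive-measure subset $B'\subset B$ with a common pair $(\cU,\delta)$.

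The second step extracts recurrence at $\mu$-typical points of $B'$. I apply Birkhoff's ergodic theorem to the time-$T$ map $\varphi_X^T$ and to $\mathbf{1}_{B'}$; combined with Egoroff, this produces $B''\subset B'$ of positive measure on which the time averages converge uniformly. For $x_0\in B''$, the flow orbit of $x_0$ returns to arbitrarily small neighborhoods of $B''$ with positive asymptotic frequency, yielding a sequence of times $t_k\to\infty$ with $d(\varphi_X^{t_k}(x_0),x_0)\to 0$, and moreover a quantitative density of such near-returns inside any flow box around $x_0$.

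Now I invoke the refined Pugh $C^1$ Closing Lemma in Mañé's form: given a sufficiently dense near-return as above and any $C^1$ neighborhood of $X$, one can produce $Y$ in this neighborhood and a periodic point $y$ of $Y$ of period close to $t_k$ whose $Y$-orbit $\delta$-shadows the segment $\{\varphi_X^t(x_0):0\le t\le t_k\}$ uniformly. Choosing the neighborhood inside $\cU$ and $k$ large contradicts the definition of $B'$, forcing $\mu(B)=0$.

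The main obstacle is producing the \emph{uniform} shadowing required by the definition of strong closability. Pugh's original lemma merely produces a periodic orbit for some $C^1$-small perturbation, without controlling its distance to the whole segment $\varphi_X^{[0,t_k]}(x_0)$. Achieving such control requires Mañé's technique of selecting ``fundamental domains'' on a cross-section transverse to $X$ and a Vitali-type covering argument, so that the many small local perturbations needed to close the orbit can be amalgamated into a single $C^1$-small perturbation while shadowing along the entire segment is preserved. Passing from diffeomorphisms to flows introduces the extra care of perturbing only transversally to $X$ and of handling variable return times inside flow boxes, which is the most delicate technical part.
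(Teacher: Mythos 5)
First, note that the paper does not prove this lemma at all: it is quoted verbatim as the classical ergodic closing lemma, with references to Ma\~n\'e \cite{M1} for the diffeomorphism case and to the flow literature (\cite{W1}, and in fact Wen's flow version in \cite{W2}) for vector fields. So the only meaningful comparison is with those classical proofs, and against them your outline has the right skeleton (reduction to the complement of $Sing(X)$, countable exhaustion to fix a common pair $(\cU,\delta)$ on a positive-measure set of ``bad'' points, Birkhoff/Poincar\'e recurrence for the time-$T$ map, then a closing perturbation) but a genuine gap at the decisive step.

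The gap is your third step: you ``invoke the refined Pugh $C^1$ Closing Lemma in Ma\~n\'e's form'' to produce, from a single good near-return of an individual recurrent point $x_0$, a $C^1$-small perturbation with a periodic orbit that $\delta$-shadows the \emph{entire} segment $\varphi_X^{[0,t_k]}(x_0)$. No such off-the-shelf lemma exists; a pointwise statement of this kind (closing with uniform shadowing at every recurrent point) is strictly stronger than the ergodic closing lemma and is precisely what fails in general -- this is why Ma\~n\'e's conclusion holds only $\mu$-almost everywhere with respect to invariant measures rather than at all recurrent points. In Ma\~n\'e's actual argument the invariant measure enters the perturbative step itself, not merely through recurrence: the closing perturbation is an amalgamation of small local perturbations supported in a box, the candidate return times must be selected so that the orbit segment's passages through the perturbation box do not accumulate displacement beyond $\delta$, and the measure (via Birkhoff-type frequency estimates) is what guarantees that such a selection is possible for a.e.\ point; the exceptional set is then shown to have measure zero directly, rather than by the clean contradiction with a fixed $(\cU,\delta)$ that you set up. You acknowledge this difficulty in your last paragraph, but you defer it entirely (``Ma\~n\'e's technique of selecting fundamental domains and a Vitali-type covering argument''), so the core of the theorem remains a black box. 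The same applies to the flow-specific issues you mention (perturbing only transversally to $X$, unbounded and variable return times, time reparametrization near the closed orbit): these are the substance of Wen's and Gan--Wen's flow versions, not routine adaptations. As a proof, therefore, the proposal identifies the correct strategy but does not establish the statement; as used in this paper, the lemma is simply imported from \cite{M1} and the flow literature, which is the appropriate course.
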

\begin{theo}\label{Kupka-Smale}(Kupka-Smale \cite{PadM}\cite{S1}\cite{K})
The set consisting  of vector fields $X \in \mathcal{X}^r(M)$ with  $r\geq1$,  such that
all critical elements are hyperbolic is residual (and therefore dense) in $\mathcal{X}^r(M)$.
\end{theo}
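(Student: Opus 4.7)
The plan is to exhibit the desired set as a countable intersection of open dense subsets of $\mathcal{X}^r(M)$ and invoke the Baire category theorem, handling singularities and periodic orbits of each bounded period separately.

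First, I would treat singularities. Define $\mathcal{H}_{\text{sing}} \subset \mathcal{X}^r(M)$ as the set of vector fields all of whose singularities are hyperbolic. Openness of $\mathcal{H}_{\text{sing}}$ follows from two observations: the equation $X(x)=0$ defines an isolated hyperbolic singularity whose location varies continuously under $C^r$-small perturbations (implicit function theorem), and the hyperbolicity condition, namely that $DX(\sigma)$ has no eigenvalue on the imaginary axis, is open in the matrix norm. Density is obtained by a local bump-function perturbation: given a non-hyperbolic $\sigma$, add $\chi(x)\cdot A(x-\sigma)$ where $\chi$ is a cutoff supported in a small neighborhood and $A$ is a tiny linear map chosen so that the spectrum of $DX(\sigma)+A$ avoids the imaginary axis. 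Since $M$ is compact and hyperbolic singularities are isolated, finitely many iterations of this perturbation make all singularities hyperbolic.

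Second, I would deal with periodic orbits period-by-period. For each integer $n\geq 1$, let $\mathcal{R}_n$ be the set of $X\in\mathcal{X}^r(M)$ such that every periodic orbit of period $\leq n$ is hyperbolic. I claim $\mathcal{R}_n$ is open and dense. For openness, note that if $\gamma$ is a hyperbolic periodic orbit of period $T\leq n$ for $X$, then via a transverse Poincar\'e section the return map $P$ satisfies $\det(I-DP)\neq 0$, so by the implicit function theorem a unique nearby closed orbit $\gamma_Y$ persists for $Y$ near $X$, and its return map varies continuously, preserving hyperbolicity. Compactness of $M$ combined with the isolation of hyperbolic orbits of period $\leq n$ shows there are only finitely many such orbits, and no new periodic orbits of period $\leq n$ can appear in a neighborhood (this requires an auxiliary argument bounding the length of closed orbits in a compactly embedded flowbox). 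For density, given a non-hyperbolic orbit $\gamma$ of period $T\leq n$ for $X$, choose a regular point $p\in\gamma$ and a small flowbox $U$ around a segment of $\gamma$ that does not intersect $\gamma$ again nor any other periodic orbit of period $\leq n$. Then perform a Franks-style linear perturbation supported in $U$: after composing the linear Poincar\'e map with a small linear transformation on the normal bundle, the new return map has spectrum avoiding the unit circle. Iterating on the finitely many non-hyperbolic orbits (which after the first perturbation are isolated), one reaches a vector field in $\mathcal{R}_n$ arbitrarily $C^r$-close to $X$.

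Finally, the set $\mathcal{H}_{\text{sing}}\cap\bigcap_{n\in\NN}\mathcal{R}_n$ is a countable intersection of open dense subsets of the Baire space $\mathcal{X}^r(M)$, hence residual; every $X$ in it has all critical elements hyperbolic.

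The main obstacle, I expect, is the density of $\mathcal{R}_n$, and in particular controlling the spectrum of the perturbed return map. It is easy to remove the eigenvalue $1$ via a generic transversality argument on $(x,t)\mapsto \varphi^t_X(x)-x$, but removing the whole unit circle requires an explicit linear perturbation on the normal bundle in a flowbox, plus the bookkeeping to ensure that finitely many such perturbations can be combined without reintroducing non-hyperbolicity elsewhere. In particular, one must first perturb to separate the finitely many non-hyperbolic orbits of period $\leq n$ into disjoint flowboxes, which relies on the compactness of $M$ and a uniform lower bound on the size of admissible flowboxes.
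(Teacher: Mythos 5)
The paper itself offers no proof of this statement; it is quoted as a classical result with references to Palis--de Melo, Smale and Kupka, and your Baire-category outline (countable intersection of open dense sets, one period scale at a time, plus Baire) is exactly the classical route of those references. However, as written your argument has two genuine gaps. First, openness of $\mathcal{R}_n$: the set of $X$ whose periodic orbits of period $\leq n$ are all hyperbolic is \emph{not} open if you do not simultaneously require the singularities to be hyperbolic. A vector field may have no non-hyperbolic closed orbit of period $\leq n$ at all, yet have a singularity with a pair of purely imaginary eigenvalues $\pm i\omega$ with $2\pi/\omega\leq n$; an arbitrarily $C^r$-small Hopf-type perturbation then creates a small non-hyperbolic closed orbit of period close to $2\pi/\omega\leq n$. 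Your auxiliary argument bounding the length of closed orbits in flowboxes excludes new short orbits away from singularities, but not this mechanism of orbits born from a non-hyperbolic zero. The standard repair (as in Palis--de Melo) is to define each set in the intersection to require hyperbolicity of \emph{all} singularities and of all closed orbits of period $\leq n$ at once; once singularities are hyperbolic, closed orbits of bounded period cannot accumulate on them, and the openness argument via persistence and local uniqueness of hyperbolic critical elements goes through.

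Second, the density of $\mathcal{R}_n$: you claim that after one perturbation the non-hyperbolic orbits of period $\leq n$ are finitely many and can be removed one at a time. Non-hyperbolic closed orbits of bounded period need not be isolated -- there may be continua of them (a center, or an open region foliated by periodic orbits of equal period), and the single local perturbation you describe does not obviously reduce the situation to finitely many offending orbits. The classical proof instead covers the compact set of points lying on closed orbits of period $\leq n$ by finitely many transverse sections (flowboxes) and proceeds by induction over this finite cover: over each section one perturbs the Poincar\'e return map to make hyperbolic all closed orbits of period $\leq n$ meeting that section, and one uses that the hyperbolicity already obtained over the previously treated sections is an open condition, hence survives the subsequent, sufficiently small perturbations. (Alternatively, a parametrized transversality/Sard argument achieves the same.) Without such a covering-and-induction device, the ``iterate over the finitely many non-hyperbolic orbits'' step is unjustified. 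The remaining ingredients of your sketch -- openness and density of the hyperbolicity of singularities, and the final application of the Baire category theorem -- are correct.
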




\begin{defi}
Let $X \in \mathcal{X}^1(M)$ be a vector field and  $\sigma$ be a hyperbolic singularity of $X$. If $\lambda_1 \leq \cdots  \lambda_s<0< \lambda_{s+1}\leq \cdots \leq \lambda_d$ are the Lyapunov exponents of $D\varphi^t(\sigma)$, then the saddle value \textsl{sv}$(\sigma)$ of $\sigma$ is defined as
$$
\textsl{sv}(\sigma)=\lambda_s+\lambda_{s+1}.$$  

\end{defi}

\begin{rema}
For the Lorenz attractor, the saddle value  $\textsl{sv}(\sigma)$ of $\sigma$ is positive. Also, for the Lorenz repeller, the saddle
value $\textsl{sv}(\sigma)$ of $\sigma$ is negative.
 
\end{rema}
Let $S_-$ and $S_+$  denote the sets of singularities that exhibit positive  and negative saddle values,
respectively. And
$$h_-^t=\prod_{\sigma_i\subset S_{-}}h_{\sigma_i}^t \text{ and } h_+^t=\prod_{\sigma_i\subset S_{+}}h_{\sigma_i}^t.$$



\begin{defi}
Let $X \in \mathcal{X}^1(M^d)$ and $\sigma$ be a hyperbolic singularity of $X$. Assume that $C(\sigma)$ is nontrivial and the Lyapunov exponents of $D\varphi^t(\sigma)$ are $\lambda_1 \leq \cdots  \lambda_s<0< \lambda_{s+1}\leq \cdots \leq \lambda_d$. We say that $\sigma$ is Lorenz-like if the following conditions are satisfied:
\begin{itemize}
\item $\textsl{sv}(\sigma)\neq 0$.

\item If $\textsl{sv}(\sigma)>0$, then $\lambda_{s-1}<\lambda_s$, and $W^{ss}(\sigma) \cap C(\sigma)=\{\sigma\}$. Here $W^{ss}(\sigma)$ is the invariant manifold corresponding to the bundle $E^{ss}_{\sigma}$ of the  splitting $T_{\sigma}M=E^{ss}_{\sigma} \oplus E^{cu}_{\sigma}$, where $E^{ss}_{\sigma}$ is the invariant space corresponding to the Lyapunov exponents $\lambda_1, \lambda_2, \cdots, \lambda_{s-1}$ and $E^{cu}_{\sigma}$ corresponding to the Lyapunov exponents $\lambda_s, \lambda_{s+1}, \dots, \lambda_d$.

\item If $\textsl{sv}(\sigma)<0$, then $\lambda_{s+1}<\lambda_{s+2}$, and $W^{uu}(\sigma) \cap C(\sigma)=\{\sigma\}$. Here, $W^{uu}(\sigma)$ is the invariant manifold corresponding to the bundle $E^{uu}_{\sigma}$ of the  splitting $T_{\sigma}M=E^{cs}_{\sigma} \oplus E^{uu}_{\sigma}$, where $E^{cs}_{\sigma}$ is the invariant space corresponding to the Lyapunov exponents $\lambda_1, \lambda_2, \cdots, \lambda_{s+1}$ and $E^{uu}_{\sigma}$ corresponding to the Lyapunov exponents $\lambda_{s+2}, \lambda_{s+1}, \dots, \lambda_d$.

\end{itemize}
\end{defi}

\begin{defi}
Let $X\in\mathcal{X}^1(M)$ be such that there exists  $\sigma\in Sing(X)$ with a non-zero saddle value. Let  $s$ be the stable index of $\sigma$. Then, the periodic index of $\sigma$, denoted by $Ind_p(\sigma)$, is defined by
$$Ind_p(\sigma)=\left\{\begin{array}{ll}
    s & \text{if } \textsl{sv}(\sigma)<0, \\
    s-1 & \text{if } \textsl{sv}(\sigma)>0.
\end{array}\right.$$
For a periodic orbit $p$ of $X$, we define $Ind_p(p)=Ind(p)$.
\end{defi}

\begin{defi}
\begin{itemize} For a vector field  $X\in\mathcal{X}^1(M)$, 
    \item We say that $X$ is homogeneous in an open set $U\subset M$ if  there exists a $C^1$ neighborhood $\cU$ of $X$, such that there exist $Y_p\in\cU$ with a critical element in $U$, and  all critical elements for all $Y\in\cU$ in $U$   have the same periodic index.
    \item We say that $X$ is homogeneous if, for every chain recurrence class of $X$ there exists an open set  in which $X$ is homogeneous.
\end{itemize}
\end{defi}

\begin{rema}
\begin{itemize}
\item Multi-singular hyperbolic vector fields are homogeneous.
\item It follows from Theorem~\ref{Kupka-Smale}, that if $X$ is a homogeneous vector field then it is a star flow.
\end{itemize}
\end{rema}


\section{Homogeneity and multi-singular hyperbolicity }\label{SecNewVOT}
The aim of this section is to prove the following Theorem:
\begin{theo}\label{T.main4} For a vector field  $X\in\mathcal{X}^1(M)$,  and $C$ a chain recurrence class of $X$, if there is a neighborhood $U$ of $C$ in which $X$ is homogeneous, then $X$ is multi-singular hyperbolic on $C$.
\end{theo}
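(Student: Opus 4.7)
The plan is to produce the three ingredients of multi-singular hyperbolicity over $B(X,C)$---hyperbolicity of every singularity in $C$, a continuous dominated splitting $\cN=E\oplus_\prec F$ for $\psi^t_{\cN}$, and uniform contraction (resp.\ expansion) of $E$ (resp.\ $F$) under the reparameterized flows $h_-^t\cdot\psi^t_{\cN}$ and $h_+^t\cdot\psi^t_{\cN}$---by leveraging the fact that homogeneity combined with Theorem~\ref{Kupka-Smale} already places us inside a star regime with a common periodic index $i$ for every critical element in $U$.

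First I would harvest hyperbolicity and the index. Combining Kupka--Smale with the connecting argument in Lemma~\ref{ConConLem}, every singularity $\sigma\in C$ must be hyperbolic with $\mathrm{Ind}_p(\sigma)=i$ and nonzero saddle value: otherwise a small perturbation would create in $U$ a critical element of a different index, contradicting homogeneity. Next I would produce the dominated splitting. On lifts of periodic orbits this is a Liao--Ma\~n\'e type consequence of the star property: periodic orbits of common index $i$ admit a uniform dominated splitting of the linear Poincar\'e flow of the appropriate dimension. Approximating $C$ by periodic critical elements of perturbations (Lemma~\ref{ConConLem}) and taking a Hausdorff limit of the lifted splittings in the relevant Grassmannian bundle over $\PP M$ yields a dominated splitting over $\widetilde C$. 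To extend it to the fibers $\PP^c_{\sigma,C}$, one uses the finest dominated splitting of $D\varphi^t(\sigma)$ restricted to the center space and projected to the normal quotient; the uniformity of the index $i$ imposed by homogeneity is exactly what guarantees continuity at the singularities.

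The heart of the proof is the third ingredient. I would argue by contradiction through ergodic theory: a failure of contraction of $E$ under $h_-^t\cdot\psi^t_{\cN}$ produces, by compactness of invariant probability measures on $B(X,C)$ together with subadditivity, an ergodic $\varphi^t_{\PP}$-invariant measure $\mu$ whose top Lyapunov exponent on $E$ for $h_-^t\cdot\psi^t_{\cN}$ is nonnegative. Either $\mu$ is supported on some $\PP^c_{\sigma,C}$, in which case an explicit diagonalization shows that the role of the cocycle $h_\sigma^t$ is to add to the $\psi^t_{\cN}$-exponent exactly the eigenvalue of $D\varphi^t(\sigma)$ in the direction $L_x\in\PP^c_{\sigma,C}$, and the sign of the saddle value combined with the inclusion $\sigma\in S_-$ makes the resulting total exponent strictly negative on $E$; or $\mu$ gives full mass to regular points, in which case Lemma~\ref{Erg} and shadowing produce a periodic orbit of some $Y\in\cU$ inside $U$ whose Lyapunov exponents are incompatible with periodic index $i$, contradicting homogeneity. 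The expansion of $F$ under $h_+^t\cdot\psi^t_{\cN}$ follows by applying the same argument to $-X$.

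The main obstacle, I expect, is the third step, and within it the singular case. What needs to be checked with care is that the reparameterization $h_-^t$ (resp.\ $h_+^t$) is exactly strong enough to cancel the ``phantom'' loss of contraction (resp.\ expansion) that $\psi^t_{\cN}$ suffers as lifts approach a singularity, uniformly over all directions $L_x\in\PP^c_{\sigma,C}$ and for every $\sigma\in C$. The locality of the cocycle $h_\sigma^t$ (equal to $1$ outside the preferred neighborhood $U_\sigma$), the filtrating neighborhoods $U_n$ used in the definition of $B(X,C)$, and the compatibility between the sign of the saddle value and the periodic index of $\sigma$ that homogeneity forces, are precisely the tools designed to handle this book-keeping.
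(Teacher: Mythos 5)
Your outline follows essentially the same route as the paper: a dominated splitting for the extended linear Poincar\'e flow over $B(X,C)$ obtained from periodic orbits of nearby vector fields (Lemma~\ref{ConConLem}) and glued to the splittings over the singular center spaces (Lemma~\ref{faltalabel}), then uniform contraction of $E$ by contradiction via an ergodic measure with nonnegative exponent, split according to whether it charges some $\PP^c_{\sigma}$ (ruled out by the cocycle computation at Lorenz-like singularities, Lemma~\ref{lorenzlike-cocycle}) or the regular part (ruled out by the ergodic closing lemma together with the fact that $\log h^T_-$ integrates to zero along periodic orbits, Propositions~\ref{Claim1}--\ref{Claim3}), and expansion of $F$ by the symmetric argument for $-X$. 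The one step you compress too much is the Lorenz-like structure of the singularities: the nonzero saddle value, the eigenvalue gap and, crucially, $W^{ss}(\sigma)\cap C(\sigma)=\{\sigma\}$ (resp.\ $W^{uu}$) do not follow from a quick index-perturbation heuristic but are imported from \cite{GSW} (Lemmas~\ref{4.2} and~\ref{4.7}, Corollary~\ref{c.lorenzlike}), and they are precisely what makes the center spaces $\PP^c_{\sigma}$ and the singular estimate usable.
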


Our other theorems will be derived from this one.

The methodology in this paper consist of \emph{cleaning} the use of some generic hypotheses found in the previous references.
Rather than stating theorems as ``\emph{there exists a generic set in which all $X$....}",
we state them with the explicit minimal generic hypothesis needed.
In several articles cited here, for instance, Lemma~\ref{ConConLem} is cited as a generic lemma, and this is true if the set of vector fields we considered was $\mathcal{X}^1(M)$. However, for the set of star vector fields, all vector fields satisfy condition $A$ and therefore, restricting ourselves to a generic subset of star vector fields is not necessary.
After this, the rest of the generic hypotheses used to show that a star flow is multi-singular hyperbolic are
used to prove that generic star flows are homogeneous. Since multi-singular hyperbolic vector fields are always
homogeneous, it is a necessary hypothesis. Thus, we restate the theorems using the homogeneous hypothesis
instead of $X$ being generic. Furthermore, this is  useful since 
instances exist where we can ensure that all star flows are homogeneous, for instance three dimensions. 

\begin{lemm}[\cite{GY}]\label{GY}
Let $X\in\mathcal{X}^1(M)$ be a vector field and $\sigma\in Sing(X)$ be hyperbolic, there exists a neighborhood $U$ of $\sigma$ such that there is no contracting   periodic point in $U$ for the extended linear Poincar\'e flow. 
\end{lemm}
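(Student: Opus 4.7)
The plan is to argue by contradiction. Suppose the conclusion fails, so that for every neighborhood of $\sigma$ there is a periodic point $p$ of $X$ inside it whose lifted orbit in $\PP M$ is uniformly contracting for $\psi^t_\cN$ over one period. This yields a sequence $p_n\to\sigma$ of such contracting periodic points with periods $T_n$ and orbits $\gamma_n$. Because $\sigma$ is hyperbolic, Hartman--Grobman implies that the linearized flow near $\sigma$ has no non-trivial bounded orbit, so each $\gamma_n$ must leave and re-enter any fixed small neighborhood $V$ of $\sigma$; a standard passage-time estimate for linear hyperbolic flows then forces $T_n\to\infty$ and, more importantly, the transit time $\tau_n$ that $\gamma_n$ spends inside $V$ tends to infinity.

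The heart of the argument is an explicit computation of $\psi^{\tau_n}_\cN$ during a single transit through $V$, carried out in linearizing coordinates where $X(x)=A(x-\sigma)+o(|x-\sigma|)$ with $A=DX(\sigma)$. Choose an eigenbasis $e_1,\ldots,e_d$ of $A$ with ordered eigenvalues $\lambda_1\le\cdots\le\lambda_s<0<\lambda_{s+1}\le\cdots\le\lambda_d$. The entry of $\gamma_n$ into $V$ is asymptotically aligned with the dominant stable direction $\langle e_s\rangle$ and its exit is asymptotically aligned with the dominant unstable direction $\langle e_d\rangle$, so the normal fibers at entry and exit are spanned, up to vanishing tilt, by $\{e_i:i\neq s\}$ and $\{e_i:i\neq d\}$ respectively. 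Evaluating $e^{\tau_n A}$ and then applying the projection defining $\psi^{\tau_n}_\cN$ shows that, in these bases, the transit map is to leading order diagonal with entries of size $e^{\lambda_i\tau_n}$ for $i\notin\{s,d\}$, an entry of size $e^{\lambda_s\tau_n}$ coming from the direction absorbed at entry, and, crucially, an entry of size $e^{(\lambda_s+\lambda_d)\tau_n}=e^{\textsl{sv}(\sigma)\tau_n}$ coming from the direction absorbed at exit. Because $\sigma$ is hyperbolic, at least one of these rates is strictly positive: some $\lambda_i>0$ with $s<i<d$ if the unstable index of $\sigma$ is at least two, or $\textsl{sv}(\sigma)$ itself if the unstable index equals one and $\textsl{sv}(\sigma)>0$. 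The remaining case $\textsl{sv}(\sigma)<0$ is obtained by applying the same analysis to $-X$, under which saddle values change sign and the contraction hypothesis transfers to the appropriate symmetric statement.

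Combined with the fact that the arc of $\gamma_n$ lying outside $V$ has length bounded independently of $n$ and contributes only a bounded distortion to $\psi^{T_n-\tau_n}_\cN$, this forces $\psi^{T_n}_\cN$ to admit a normal direction with growth at least $C^{-1}e^{\mu\tau_n}$, with constants $C,\mu>0$ depending only on the spectrum of $A$; letting $\tau_n\to\infty$ contradicts the assumed uniform contraction. The principal technical obstacle is the transit computation in the codimension-one unstable case, where a naive normal-bundle analysis would give expansion $e^{\lambda_d\tau_n}$ but the projection onto the exit normal fiber exactly cancels this, leaving only the much smaller residual rate $\textsl{sv}(\sigma)$ arising from the tilt of $\hat X$ away from $e_d$; keeping this absorption book-keeping straight in arbitrary dimension, and controlling the $o(|x-\sigma|)$ nonlinear remainder of $X$ uniformly as $p_n\to\sigma$ and $\tau_n\to\infty$ (either via a $C^1$ linearization estimate or by passing to a weak-$*$ limit of the normalized orbit measures of $\gamma_n$ lifted to $\PP M$ and computing Lyapunov exponents of the limit), constitutes the delicate part of the argument.
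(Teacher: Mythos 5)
The paper does not prove this lemma; it cites it from [GY] and uses it as a black box, so there is no internal proof to compare against. I will therefore assess the proposal on its own.

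Your transit computation is the right kind of argument for such a statement, but the patch you propose for the case $\textsl{sv}(\sigma)<0$ with one--dimensional unstable space (that is, $d=s+1$) does not work, and this is the case where the mechanism genuinely breaks. You say this case ``is obtained by applying the same analysis to $-X$.'' But a periodic orbit that is contracting for $\psi^t_\cN$ under $X$ is \emph{expanding} for $\psi^t_\cN$ under $-X$; the sign of $\textsl{sv}$ flips and the stable and unstable indices swap, so running your positive--$\textsl{sv}$ estimate for $-X$ only excludes orbits that are contracting for $-X$, i.e.\ expanding for $X$ --- which is not your contradiction hypothesis. The symmetry proves a dual statement, not the one you need.

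More seriously, in that case no expanding direction exists in the local transit to begin with. When the unstable index is one, the exit tangent is forced onto the single unstable eigendirection, so the exit normal fiber lies (to leading order) in the stable subspace. Carrying through the tilt bookkeeping you describe, the singular values of $\psi^\tau_\cN$ across the linearizing ball are, up to bounded factors, $e^{\lambda_i\tau}$ for pure stable directions $i$ distinct from the entry direction $j$, together with $e^{(\lambda_j+\lambda_d)\tau}$ for the absorbed mixed direction. All the former are contracting, and $\lambda_j+\lambda_d\le\lambda_{d-1}+\lambda_d=\textsl{sv}(\sigma)<0$, so the mixed rate is contracting too: the transit is uniformly contracting and your contradiction evaporates. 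This is consistent with the classical two--dimensional picture of an attracting homoclinic loop at a saddle with negative saddle value, where sink periodic orbits pass arbitrarily close to the singularity; so the lemma must be resting on additional structure (the cited [GY] is specific to three--dimensional singular flows and is used here only for star flows whose singularities are Lorenz--like), and the obstruction cannot come from the local passage alone. A secondary point: your identification of the mixed rate $\lambda_s+\lambda_d$ with $\textsl{sv}(\sigma)=\lambda_s+\lambda_{s+1}$ is correct only when $d=s+1$, and the assertion that the entry tangent aligns with $e_s$ and the exit tangent with $e_d$ is not automatic --- it depends on which component of the entry point dominates, and the mixed rate is in general $\lambda_j+\lambda_k$ for the actual entry/exit eigendirections $e_j,e_k$.
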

\begin{rema}\label{sillas}
From Lemma \ref{ConConLem}, for any star vector field $X\in\mathcal{X}^1(M)$ and a nontrivial chain recurrence class $C(\sigma)$, where $\sigma\in Sing(X)$, there is a sequence of star vector fields $Y_n\to X$ and a sequence of periodic orbits $\gamma_n$ that converge to $C(\sigma)$ in the Hausdorff topology. Consequently, from Lemma~\ref{GY}, we  conclude that these periodic orbits are not contracting or expanding.
\end{rema}
\begin{lemm}[\cite{L}\cite{Ma2}]\label{l.uniformlyattheperiod}
For any star vector field $X$ on a compact  manifold $M$, there is a $C^1$ neighborhood $\cU$ of $X$ and numbers $\eta> 0$
and $T > 0$ such that, for any periodic orbit $\gamma$ of a vector field $Y\in \cU$ and any integer $m>0$; the following holds: Let  $N= N_s\oplus N_u$ be
the stable- unstable splitting of the normal bundle $N$ for the linear Poincar\'e flow $\psi_t^Y$  Then:
\begin{itemize}
\item \emph{Domination: }For every $x \in \gamma$ and $t \geq T$, one has  $$\frac{\norm{\psi_t^Y\mid_{N_s}}}{\min(\psi_t^Y\mid_{N_u})}\leq e^{-2\eta t}\,.$$
\item\emph{ Uniform hyperbolicity at the period:} If the period $\pi(\gamma)$ is larger than  $T$, then, for every $x\in \gamma$,  one has the following:

$$
    \prod^{[m\pi(\gamma)/T]-1}_{i=0}\norm{\psi_t^Y\mid_{N_s}(\varphi^Y_{iT}(x))}\leq e^{-m\eta \pi(\gamma)}
$$
and
$$
    \prod^{[m\pi(\gamma)/T]-1}_{i=0}\min(\psi_t^Y\mid_{N_u}(\varphi^Y_{iT}(x)))\geq e^{m\eta \pi(\gamma)}\,.$$
    Here, $\min(A)$ is the mini-norm of $A$, i.e., $\min(A) = \norm{A^{-1}}^{-1}$.
\end{itemize}
\end{lemm}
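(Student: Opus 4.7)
The plan is to prove both conclusions by contradiction, using Franks' lemma for the linear Poincar\'e flow along periodic orbits (Liao--Ma\~n\'e): any $C^0$-small modification of $\psi_t^Y$ along a periodic orbit $\gamma$ of $Y$ is realized as $\psi_t^{Y'}$ for some $Y'\in\cX^1(M)$ that is $C^1$-close to $Y$ and still has $\gamma$ as a periodic orbit. The common mechanism is: if either the domination or the period estimate fails uniformly on every $C^1$ neighborhood of $X$, one manufactures such a small $C^1$ perturbation $Y'\in\cU$ carrying a non-hyperbolic periodic orbit, contradicting the star hypothesis.

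For \emph{domination}, assume there exist sequences $T_n\to\infty$, $\eta_n\to 0$, $Y_n\to X$ in $\cX^1(M)$, periodic orbits $\gamma_n$ of $Y_n$, and $x_n\in\gamma_n$ with
\[
\frac{\|\psi^{Y_n}_{T_n}|_{N_s(x_n)}\|}{\min(\psi^{Y_n}_{T_n}|_{N_u(x_n)})}\;>\;e^{-2\eta_n T_n}.
\]
Comparable stable and unstable rates at scale $T_n$ allow a small rotation of the normal bundle that carries a unit vector of $N_u(x_n)$ into $N_s(\varphi^{Y_n}_{T_n}(x_n))$; composing finitely many such rotations around $\gamma_n$ and invoking Franks' lemma yields $Y'_n\in\cU$ with $\gamma_n$ as a periodic orbit whose normal Poincar\'e return map has either a complex-conjugate pair or a unit-modulus Floquet multiplier, hence not hyperbolic. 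This contradicts the star property of $\cU$.

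For \emph{uniform hyperbolicity at the period}, suppose the displayed product estimate fails for any fixed $T,\eta$. Then along suitable periodic orbits $\gamma_n$ of $Y_n\in\cU$ the Lyapunov-type average of $\log\|\psi^{Y_n}_{T_n}|_{N_s}\|$ tends to zero. A Liao-type selection---which applies because condition $(A)$ is automatic for star vector fields by Theorem~\ref{Kupka-Smale} and the openness of hyperbolicity of isolated critical elements---extracts pairwise disjoint $T_n$-pieces along $\gamma_n$ on which one distributes a small Franks perturbation to push the total product above $1$. The resulting $Y'_n\in\cU$ then has a periodic orbit with a stable-direction Floquet multiplier of modulus $\geq 1$, contradicting the star hypothesis. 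The analogous estimate for $N_u$ follows by applying the argument to $-Y$.

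The main obstacle is the disjointness and cumulative $C^1$-smallness of the many local Franks perturbations: one must cover a positive fraction of $\gamma_n$ by pairwise disjoint short pieces and only perturb on those, balancing their number against the magnitude of each rotation, so that the resulting vector field is still in $\cU$. This is precisely what forces both $T$ and $\eta$ to be chosen uniformly in $\cU$ and is where the star hypothesis is used quantitatively rather than qualitatively; the uniformity itself is obtained by a diagonal argument on a nested sequence of neighborhoods shrinking to $X$.
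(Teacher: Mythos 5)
This lemma is not proved in the paper: it is imported from Liao and Ma\~n\'e (\cite{L}, \cite{Ma2}; see also \cite{W1}), so your attempt can only be measured against those classical proofs. Your overall strategy is indeed theirs in spirit: the star condition plus a Franks-type realization lemma for the linear Poincar\'e flow makes the family of periodic linear Poincar\'e cocycles of all $Y\in\cU$ robustly hyperbolic, and the two displayed items are exactly the conclusion of the Liao--Ma\~n\'e theorem on such families (e.g.\ Lemma II.3 in \cite{M1}, Proposition II.1 in \cite{Ma2}), whose proof proceeds by contradiction with small rotations/homotheties, a Pliss-type selection, and a diagonal argument for uniformity.

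However, as written your argument has genuine gaps at precisely the hard points of that theorem. In the domination step, the negated statement only provides one point $x_n$ and one time $t_n\ge T_n$ with ratio greater than $e^{-2\eta_n t_n}$; since $\eta_n t_n$ may be large, this bound is far from ``comparable stable and unstable rates'', so the advertised small rotation carrying $N_u(x_n)$ into $N_s(\varphi^{Y_n}_{t_n}(x_n))$ need not be realizable by a $C^1$-small perturbation --- one must first run a Pliss/Liao selection inside the window to extract blocks of bounded length on which the per-block ratio is close to $1$, and only then rotate block by block. Even granting the swap across that window, the remainder of the orbit (possibly far longer, and strongly hyperbolic there) can keep the return map hyperbolic, so no unit-modulus Floquet multiplier is produced without combining this with the at-the-period estimates and a further homothety-type perturbation; also, a complex-conjugate pair of multipliers is not by itself a failure of hyperbolicity. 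In the at-the-period step, pushing the product of norms $\prod_i\Vert\psi^{Y}_T\vert_{N_s}(\varphi^Y_{iT}(x))\Vert$ above $1$ does not yield a multiplier of modulus at least $1$, because the norm of a product is bounded above, not below, by the product of the norms; turning the norm product into actual vector growth requires additional small rotations matching most-expanded directions across consecutive blocks, and the quantitative balance between the number of perturbed blocks and the size of each perturbation is exactly the core of Liao's and Ma\~n\'e's argument. You explicitly identify this balancing as ``the main obstacle'' but do not resolve it, so the proposal is a correct roadmap to the cited proofs rather than a proof.
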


\begin{rema}
Under the same assumptions of the previous lemma, the uniform hyperbolicity at the period is sometimes expressed in the following equivalent form: $$
    \sum^{[m\pi(\gamma)/T]-1}_{i=0}\log\norm{\psi_t^Y\mid_{N_s}(\varphi^Y_{iT}(x))}\leq -m\eta \pi(\gamma)$$
\end{rema}
The following two Lemmas from \cite{GSW}  imply that star flows exhibit Lorenz-like singularities.

\begin{lemm}[\label{4.2}Lemma 4.2 in \cite{GSW}]
Let $X $ be a star flow in $M$ and $\sigma\in Sing(X)$. Assume that the Lyapunov exponents of
$D\varphi_t(\sigma)$ are $$\lambda_1\leq\dots\leq\lambda_{s-1}\leq\lambda_s <0<\lambda_{s+1}\leq\lambda_{s+1}\leq\dots\leq \lambda_d\,.$$

If the chain recurrence class $C(\sigma)$ of $\sigma$
is nontrivial, then
\begin{itemize}
\item either $\lambda_{s-1}\neq\lambda_s$ or $\lambda_{s+1}\neq\lambda_{s+2}$.
\item If  $\lambda_{s-1}=\lambda_s$, then $\lambda_s +\lambda_{s+1}<0$.
\item If $\lambda_{s+1}=\lambda_{s+2}$, $\lambda_s +\lambda_{s+1}>0$.
\item  If $\lambda_{s-1}\neq\lambda_s$ and $\lambda_{s+1}\neq\lambda_{s+2}$, then $\lambda_s +\lambda_{s+1}\neq0$.
\end{itemize}
\end{lemm}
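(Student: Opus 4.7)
My plan is to propagate dominated-splitting information from nearby hyperbolic periodic orbits onto the singularity $\sigma$ via the extended linear Poincar\'e flow, and then confront this information with the Lyapunov decomposition of $D\varphi_t(\sigma)$. Since $C(\sigma)$ is nontrivial and $X$ is a star flow, Lemma~\ref{ConConLem} together with Theorem~\ref{Kupka-Smale} furnishes star vector fields $Y_n\to X$ and hyperbolic periodic orbits $\gamma_n$ of $Y_n$ converging to $C(\sigma)$ in Hausdorff distance; in particular $\gamma_n$ enters every neighbourhood of $\sigma$. Remark~\ref{sillas}, via Lemma~\ref{GY}, rules out that $\gamma_n$ is contracting or expanding under the linear Poincar\'e flow $\psi^t_{Y_n}$, so its stable dimension $k_n$ lies in $\{1,\dots,d-2\}$. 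After passing to a subsequence, assume $k_n\equiv k$. Lemma~\ref{l.uniformlyattheperiod} then delivers a uniform $(k,d-1-k)$-dominated splitting of the normal bundle along $\bigcup\gamma_n$ together with uniform hyperbolicity at the period.

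Next I would extract limits in the projective bundle. For $x_n\in\gamma_n$ accumulating on $\sigma$, the lifts $L_{x_n}=\langle Y_n(x_n)\rangle\in\PP_{x_n}$ admit subsequential limits $L\in\PP_\sigma$; a standard tangent-direction analysis near a hyperbolic fixed point shows that each such $L$ must be an eigendirection of $D\varphi_t(\sigma)$, namely the slowest-decaying stable eigenline $\langle v_s\rangle$ for forward-approaching orbit arcs, or the slowest-growing unstable eigenline $\langle v_{s+1}\rangle$ for backward-approaching ones. The uniform domination of Lemma~\ref{l.uniformlyattheperiod} survives these limits, producing a continuous $\psi^t_\cN$-invariant dominated splitting $E\oplus_\prec F$ of $\cN$ over $B(X,C)\cap\PP_\sigma$ with $\dim E=k$. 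On the fibre $\cN_L=T_\sigma M/L$ at an eigenline $L$, this splitting is assembled from the Lyapunov eigenspaces of $\sigma$ with $\lambda_L$ removed, and the uniformly contracting/expanding rates inherited from the period estimate force $E$ to collect negative exponents and $F$ positive ones. This pins $k\in\{s-1,s\}$, with $k=s-1$ when the limit is $L=\langle v_s\rangle$ and $k=s$ when $L=\langle v_{s+1}\rangle$.

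The four conclusions now follow from demanding coherence of the dominated splitting at both accessible limit lines, since $\langle v_s\rangle$ and $\langle v_{s+1}\rangle$ both lie in the center space and hence in $B(X,C)\cap\PP_\sigma$. In the $k=s-1$ case, compatibility at $L=\langle v_{s+1}\rangle$ requires $\max(E)=\lambda_{s-1}<\lambda_s=\min(F)$, i.e.\ $\lambda_{s-1}<\lambda_s$; symmetrically, in the $k=s$ case, compatibility at $L=\langle v_s\rangle$ requires $\lambda_{s+1}<\lambda_{s+2}$. Contrapositively, $\lambda_{s-1}=\lambda_s$ forces $k=s$, and the uniform-hyperbolicity-at-the-period estimate, whose integrated exponential rate along $\gamma_n$ converges to $\lambda_s+\lambda_{s+1}$ as $\gamma_n\to\sigma$, then yields $\lambda_s+\lambda_{s+1}<0$; symmetrically $\lambda_{s+1}=\lambda_{s+2}$ gives $\lambda_s+\lambda_{s+1}>0$. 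When both spectral gaps are present, the same rate comparison excludes $\lambda_s+\lambda_{s+1}=0$, establishing the fourth bullet, and the first bullet is an immediate logical consequence of the second and third.

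The principal obstacle is the passage to the limit: proving rigorously that each accumulation line $L\in\PP_\sigma$ is a genuine eigendirection of $D\varphi_t(\sigma)$ rather than an arbitrary line in a repeated eigenspace, verifying that both $\langle v_s\rangle$ and $\langle v_{s+1}\rangle$ indeed sit inside $B(X,C)\cap\PP_\sigma$, and converting the uniform period estimate of Lemma~\ref{l.uniformlyattheperiod}, whose right-hand side involves $\pi(\gamma_n)\to\infty$, into a clean strict inequality on the Lyapunov exponents of $\sigma$. The normalisation by $\|X\|$ inside $\psi^t$ introduces a factor that blows up at a rate governed by $\lambda_L$ as $\gamma_n$ approaches $\sigma$; carefully tracking this factor so that the uniform contraction/expansion rates of $E$ and $F$ translate into the stated sign of $\lambda_s+\lambda_{s+1}$ is the delicate technical heart of the argument.
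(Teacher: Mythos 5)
First, note that the paper you are being compared with does not prove this statement at all: it is imported verbatim as Lemma 4.2 of \cite{GSW}, whose proof goes through Liao-type estimates for star flows combined with local perturbations of the linear part at $\sigma$ (e.g.\ turning the weak stable exponents into a non-real pair when $\lambda_{s-1}=\lambda_s$) and the star condition applied to the periodic orbits so created. Measured against that, or against any self-contained argument, your sketch has two genuine gaps, both of which you flag yourself but neither of which is routine. The first is the claim that every accumulation line $L\in\PP_\sigma$ of the lifted directions $\langle Y_n(x_n)\rangle$ is an eigendirection, namely $\langle v_s\rangle$ or $\langle v_{s+1}\rangle$. This is unproved and, worse, is not even meaningful in precisely the cases the lemma is about: if $\lambda_{s-1}=\lambda_s$ comes from a complex conjugate pair there is no invariant weak-stable line $v_s$ and the limit directions rotate in that plane; for real repeated exponents a limit line can be any line of the corresponding invariant subspace; and without knowing beforehand that $W^{ss}(\sigma)\cap C(\sigma)=\{\sigma\}$ (resp.\ $W^{uu}$) you cannot exclude limits inside the strong stable or strong unstable projective spaces. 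That escaping property is exactly the companion Lemma~\ref{4.7}, not available here, and in the present paper the domination over $\PP^c_\sigma$ (Lemma~\ref{faltalabel}, via Lemma~\ref{lorenzlike-cocycle}) is itself deduced from the Lorenz-like property, i.e.\ from the very statement you are proving, so it cannot be borrowed either. Your pinning of $k\in\{s-1,s\}$ also silently assumes that the contraction/expansion of $E$ and $F$ seen at the period along $\gamma_n$ passes to the singular fibres, which is the content of the hard ergodic-measure argument (Propositions~\ref{Claim1}--\ref{Claim3}), not of domination alone.

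The second gap is quantitative and carries all of bullets 2--4: you assert that the integrated rate in Liao's uniform hyperbolicity at the period ``converges to $\lambda_s+\lambda_{s+1}$'' as $\gamma_n\to\sigma$, but this is exactly the delicate point you then defer. Along a passage near $\sigma$ the linear Poincar\'e flow mixes the normal rates with the evolution of the flow direction (whose norm collapses at $\sigma$), and the saddle value only emerges after a two-phase incoming/outgoing analysis, or an area estimate in the weak $2$-plane spanned by the $\lambda_s$- and $\lambda_{s+1}$-spaces, with control of the transition between the phases. Without that computation the strict inequalities in the second and third bullets, and in particular the exclusion of $\lambda_s+\lambda_{s+1}=0$ in the fourth bullet (where domination exists regardless of the sign), are not established. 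As it stands the proposal is a plausible programme rather than a proof; the cited proof closes these gaps by perturbing $D\varphi^t$ at $\sigma$ and invoking the star condition on the resulting critical elements, which is the route to follow if you want to complete the argument.
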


\begin{lemm}[\label{4.7}Lemma 4.7 in \cite{GSW}]
Let $X$ be a star flow in $M$ and $\sigma$ be a singularity of $X$ such that $C(\sigma)$ is nontrivial.
Then for any singularity $\rho\in C(\sigma)$, we have the following:
\begin{itemize} \item if $sv(\rho)>0$,  one has $W^{ss}(\rho) \cap C(\sigma)=\{\rho\}.\,$ Here $W^{ss}(\rho)$ is the invariant manifold corresponding to the bundle $E^{ss}_{\rho}$ of the  splitting $T_{\rho}M=E^{ss}_{\rho} \oplus E^{cu}_{\rho}$, where $E^{ss}_{\rho}$ is the invariant space corresponding to the Lyapunov exponents $\lambda_1, \lambda_2, \cdots, \lambda_{s-1}$ and $E^{cu}_{\rho}$ corresponding to the Lyapunov exponents $\lambda_s, \lambda_{s+1}, \dots, \lambda_d$.

\item If $sv(\rho)<0$, one has $W^{uu}(\rho) \cap C(\sigma)=\{\rho\}.\,$ Here $W^{uu}(\rho)$ is the invariant manifold corresponding to the bundle $E^{uu}_{\rho}$ of the  splitting $T_{\rho}M=E^{cs}_{\rho} \oplus E^{uu}_{\rho}$, where $E^{cs}_{\rho}$ is the invariant space corresponding to the Lyapunov exponents $\lambda_1, \lambda_2, \cdots, \lambda_{s+1}$ and $E^{uu}_{\rho}$ corresponding to the Lyapunov exponents $\lambda_{s+2}, \lambda_{s+1}, \dots, \lambda_d$.

\end{itemize}
\end{lemm}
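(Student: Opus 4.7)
The plan is a contradiction argument, treating only the case $sv(\rho)>0$ since the other case follows by replacing $X$ with $-X$. Suppose there exists $x\in(W^{ss}(\rho)\cap C(\sigma))\setminus\{\rho\}$. Lemma~\ref{4.2} applied to $\rho$ yields $\lambda_{s-1}<\lambda_s$ strictly, so $E^{ss}_\rho$ of dimension $s-1$ and the manifold $W^{ss}(\rho)$ tangent to it are well defined. The forward orbit $\varphi^t(x)$ converges to $\rho$ along $W^{ss}$, and as $t\to+\infty$ the unit tangent $X(\varphi^t(x))/\|X(\varphi^t(x))\|$ accumulates on the weakest eigendirection of $D\varphi^t(\rho)|_{E^{ss}_\rho}$, namely the $\lambda_{s-1}$-eigenspace. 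Since $X$ is a star flow it satisfies condition~(A); using chain recurrence in $C(\sigma)$ to close a pseudo-orbit from $x$ to $\rho$ to some $y\in W^u(\rho)\cap C(\sigma)$ near $\rho$ and back to $x$, and invoking Lemma~\ref{ConConLem} together with standard connecting-lemma refinements, produce vector fields $Y_n\to X$ in the star neighborhood of $X$ and hyperbolic periodic orbits $\gamma_n$ of $Y_n$ passing $o(1)$-close to both $x$ and $\rho$, spending time $T_n\to\infty$ in a fixed small neighborhood of $\rho$, entering close to $W^{ss}(\rho)$ and leaving close to $W^u(\rho)$.

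By Remark~\ref{sillas} the index $i_n$ of $\gamma_n$ lies in $\{1,\dots,d-2\}$; passing to a subsequence, $i_n\equiv i$. Lemma~\ref{l.uniformlyattheperiod} gives a dominated splitting $N=N_s\oplus_\prec N_u$ of the normal bundle over $\gamma_n$ for the linear Poincar\'e flow $\psi^t_{Y_n}$, with $\dim N_s=i$ and contraction/expansion uniform at the period. Compare $i$ against the Poincar\'e Lyapunov spectrum at $\rho$ on the two halves of the $\rho$-excursion. On the entrance piece the flow direction is (up to $o(1)$) the $\lambda_{s-1}$-eigendirection, so the Poincar\'e exponents there are $\{\lambda_j-\lambda_{s-1}:j\neq s-1\}$, of which by Lemma~\ref{4.2} only the $s-2$ rates with $j\leq s-2$ are negative. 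Hence if $i\geq s-1$, the restriction of $N_s$ at such a point (being the $i$-dimensional most-contracted subbundle of a linearization essentially equal to that at $\rho$) must contain a direction expanding at rate at least $\lambda_s-\lambda_{s-1}>0$; tracking this direction across the excursion of duration $T_n$ and noting that $\gamma_n$ spends only bounded time outside the chosen neighborhood of $\rho$, one gets $\|\psi^{\pi(\gamma_n)}|_{N_s}\|\gtrsim e^{(\lambda_s-\lambda_{s-1})T_n}$, in direct conflict with $\|\psi^{\pi(\gamma_n)}|_{N_s}\|\leq e^{-\eta\pi(\gamma_n)}$ for large $n$. Symmetrically, on the exit piece the flow direction is (up to $o(1)$) the $\lambda_{s+1}$-eigendirection, giving exactly $s$ negative Poincar\'e rates; if $i\leq s-2$, then $N_u$, of dimension $d-1-i\geq d-s+1$, is forced to contain the $\lambda_s-\lambda_{s+1}<0$ direction, which is strictly more contracting than every direction allotted to $N_s$ on the entrance side, breaking the uniform domination $N_s\oplus_\prec N_u$ in the limit. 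No $i\in\{1,\dots,d-2\}$ survives, which is the required contradiction.

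The principal obstacle is the refined connecting-lemma step: one must ensure that $\gamma_n$ does not merely come close to $\{\varphi^t(x)\}$ and to $\rho$ but genuinely shadows a neighborhood of $W^{ss}(\rho)$ on entrance and of $W^u(\rho)$ on exit, with a single long excursion whose duration $T_n\to\infty$ and during which the Poincar\'e Lyapunov spectrum stabilizes on the two chosen eigendirections. This requires combining Lemma~\ref{ConConLem} with a local $\lambda$-lemma style argument at $\rho$, carried out inside the $C^1$ star neighborhood of $X$ so that Lemma~\ref{l.uniformlyattheperiod} yields uniform constants. Once this shadowing is in hand, Lemmas~\ref{l.uniformlyattheperiod} and~\ref{4.2} drive the contradiction through the index dichotomy above.
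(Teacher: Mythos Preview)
The paper does not actually prove this lemma; it is quoted verbatim as ``Lemma 4.7 in \cite{GSW}'' and used as a black box, so there is no in-paper argument to compare your proposal against.

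That said, your sketch has two genuine gaps. First, the assertion that $\gamma_n$ ``spends only bounded time outside the chosen neighborhood of $\rho$'' is unjustified: Lemma~\ref{ConConLem} only approximates $C(\sigma)$ in the Hausdorff metric and gives no control on how the period $\pi(\gamma_n)$ is distributed between the $\rho$-excursion and the rest of the orbit; nothing prevents $\pi(\gamma_n)-T_n$ from going to infinity as well, in which case the inequality $\|\psi^{\pi(\gamma_n)}|_{N_s}\|\gtrsim e^{(\lambda_s-\lambda_{s-1})T_n}$ cannot be played off against the uniform-at-the-period bound. Second, the dichotomy on $i$ conflates local Poincar\'e spectrum with the global bundle: knowing that at one instant along the entrance piece the $i$-dimensional most-contracted directions for the \emph{local} linearization have a certain profile does not force the \emph{global} $N_s$ of $\gamma_n$ to contain a specific eigendirection of $D\varphi^t(\rho)$; the global splitting is determined by the whole orbit, and your passage from ``$i\geq s-1$ locally implies $N_s$ contains the $\lambda_s$-direction'' needs an actual argument (typically via domination and a limit, not by counting). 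The $i\leq s-2$ branch has the same defect on the exit side.

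The approach you outline is in the spirit of the original proof in \cite{GSW}, which does proceed by producing periodic orbits of nearby star vector fields that shadow a homoclinic-type connection at $\rho$ and then extracting an index contradiction from Liao's uniform estimates; but there the shadowing and the comparison of the global stable bundle with the local splitting at $\rho$ are handled with considerably more care than your paragraph allows. As written, the ``principal obstacle'' you flag is indeed an obstacle, and the two time/index claims above would need to be repaired before the contradiction closes.
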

To summarize both results, we present the following corollary:
\begin{coro}\label{c.lorenzlike}
Let $X$ be a star flow in $M$ and $\sigma$ be a singularity of $X$ such that $C(\sigma)$ is nontrivial.
Then, any singularity in $C(\sigma)$ is Lorenz-like.
\end{coro}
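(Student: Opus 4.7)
The plan is to verify, for an arbitrary singularity $\rho\in C(\sigma)$, each of the three clauses in the definition of Lorenz-like, reading the first two directly from Lemma~\ref{4.2} applied at $\rho$ (which is legitimate because $C(\rho)=C(\sigma)$ is nontrivial) and the third from Lemma~\ref{4.7}.

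First I would check that $\textsl{sv}(\rho)\neq 0$. Let $\lambda_1\leq\cdots\leq\lambda_s<0<\lambda_{s+1}\leq\cdots\leq\lambda_d$ denote the Lyapunov exponents of $D\varphi_t(\rho)$, so $\textsl{sv}(\rho)=\lambda_s+\lambda_{s+1}$. Lemma~\ref{4.2} splits into three exhaustive cases: if $\lambda_{s-1}=\lambda_s$, the second bullet gives $\textsl{sv}(\rho)<0$; if $\lambda_{s+1}=\lambda_{s+2}$, the third bullet gives $\textsl{sv}(\rho)>0$; and if both equalities fail, the fourth bullet directly asserts $\textsl{sv}(\rho)\neq 0$. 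In every case the saddle value is nonzero.

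Next I would handle the spectral strictness demanded by the Lorenz-like definition. Suppose $\textsl{sv}(\rho)>0$: if one had $\lambda_{s-1}=\lambda_s$, the second bullet of Lemma~\ref{4.2} would force $\textsl{sv}(\rho)<0$, a contradiction, so $\lambda_{s-1}<\lambda_s$ (the spectrum being weakly increasing). Symmetrically, if $\textsl{sv}(\rho)<0$, the third bullet forces $\lambda_{s+1}<\lambda_{s+2}$. Finally, the two manifold conditions $W^{ss}(\rho)\cap C(\sigma)=\{\rho\}$ when $\textsl{sv}(\rho)>0$ and $W^{uu}(\rho)\cap C(\sigma)=\{\rho\}$ when $\textsl{sv}(\rho)<0$ are precisely the conclusions of Lemma~\ref{4.7}.

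There is no genuine obstacle here: the corollary packages the two preceding lemmas together. The only point to take care about is that the splittings $T_\rho M=E^{ss}_\rho\oplus E^{cu}_\rho$ and $T_\rho M=E^{cs}_\rho\oplus E^{uu}_\rho$ required in the Lorenz-like definition are the ones made available by the strict gaps $\lambda_{s-1}<\lambda_s$ and $\lambda_{s+1}<\lambda_{s+2}$ established above, and that these coincide with the splittings used in Lemma~\ref{4.7}. Since both statements label the invariant subbundles by the same groupings of Lyapunov exponents, this identification is immediate from the definition of the corresponding $D\varphi_t(\rho)$-invariant subspaces.
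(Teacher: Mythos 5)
Your proof is correct and follows exactly the route the paper intends: the corollary is stated there as a direct summary of Lemma~\ref{4.2} (giving $\textsl{sv}(\rho)\neq 0$ and the spectral gaps) and Lemma~\ref{4.7} (giving the $W^{ss}$/$W^{uu}$ intersection conditions), applied at $\rho$ since $C(\rho)=C(\sigma)$ is nontrivial. Your case analysis just makes this bookkeeping explicit, so there is nothing to add.
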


The following lemma is a restatement of  Corollary 66 in \cite{BdL}. 
 	Let $X\in\cX^1(M)$, let $\sigma$ be a Lorenz-like singularity with splitting $T_\sigma M=E^{s}\oplus E^c\oplus E^{uu}$,
 	and let $\PP E^{cs}_\sigma$ denote the projective space of $E^s\oplus E^c$. Then, the extended linear Poincar\'e flow admits a dominated splitting $\cN^s\oplus \cN^u$
 	with $\dim(\cN^s)=\dim(E^{s})$ over $\PP E^{cs}_\sigma$.

\begin{lemm}~\label{lorenzlike-cocycle}
 For $X\in\cX^1(M)$, let $\sigma$ be a Lorenz-like singularity with splitting $T_\sigma M=E_{\sigma}^{cs}\oplus E_{\sigma}^{uu}$
 	and let $\PP E^{cs}_{\sigma}$ denote the projective space of $E_{{\sigma}}^{cs}$, for which the extended linear Poincar\'e flow admits a dominated splitting $\cN_{L_{\sigma}}=E\oplus E_{\sigma}^{uu}$,
  for ${L_{\sigma}}\in \PP E^{cs}_\sigma$, 	with $\dim(E)=s$ where $s$ is the stable index of $\sigma$.
 Then,  $(h^t_{-}\cdot\psi_t|_E)$ contracts uniformly.
 \end{lemm}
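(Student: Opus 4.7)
My plan is to reduce the contraction of $h^t_-\cdot\psi^t_\cN|_E$ to the uniform contraction of the second exterior power of $D\varphi^t$ on $\Lambda^2 E^{cs}_\sigma$, which is automatic from the Lorenz-like hypothesis. First I would observe that the Lorenz-like condition with splitting $T_\sigma M=E^{cs}_\sigma\oplus E^{uu}_\sigma$ forces $sv(\sigma)=\lambda_s+\lambda_{s+1}<0$, where $\lambda_1\le\dots\le\lambda_{s+1}$ are the Lyapunov exponents of $D\varphi^t|_{E^{cs}_\sigma}$, so $\sigma\in S_-$. Since $\{\sigma\}$ is the maximal invariant set in $U_\sigma$ and the orbits of $\varphi^t_{\PP}$ through $\PP E^{cs}_\sigma\subset\PP T_\sigma M$ remain at the base point $\sigma$, every factor $h^t_{\sigma'}$ with $\sigma'\neq\sigma$ is identically $1$ over $\PP E^{cs}_\sigma$, so $h^t_-=h^t_\sigma$ and $h^t_\sigma(L)=\|D\varphi^t|_L\|$.

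The heart of the argument is a short wedge identity. For $L_\sigma=\langle v\rangle\in\PP E^{cs}_\sigma$ with $\|v\|=1$, I would identify $\cN_{L_\sigma}$ with $L_\sigma^\perp\subset T_\sigma M$ via the Riemannian metric and take $[w]\in E\subset\cN_{L_\sigma}$ represented by $w\in E^{cs}_\sigma\cap L_\sigma^\perp$, so that $\|[w]\|=\|w\|=\|v\wedge w\|$. The elementary identity $\|u-\mathrm{proj}_{\langle v'\rangle}u\|=\|u\wedge v'\|/\|v'\|$ applied to $u=D\varphi^t w$ and $v'=D\varphi^t v$ gives $\|\psi^t_\cN([w])\|=\|D\varphi^t v\wedge D\varphi^t w\|/\|D\varphi^t v\|$, and multiplying by $h^t_\sigma(L_\sigma)=\|D\varphi^t v\|$ yields
$$h^t_\sigma(L_\sigma)\cdot\|\psi^t_\cN([w])\|=\|D\varphi^t(v\wedge w)\|.$$
Dividing by $\|[w]\|=\|v\wedge w\|$, the stretch factor of $h^t_-\cdot\psi^t_\cN$ on $E$ over $L_\sigma$ coincides with the stretch factor of $(D\varphi^t)^{\wedge 2}$ on the bivector $v\wedge w\in\Lambda^2 E^{cs}_\sigma$.

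To conclude I would use that the Lyapunov exponents of $(D\varphi^t)^{\wedge 2}$ on $\Lambda^2 E^{cs}_\sigma$ are precisely the sums $\lambda_i+\lambda_j$ with $1\le i<j\le s+1$, whose maximum is $\lambda_s+\lambda_{s+1}=sv(\sigma)<0$. Consequently, for any $\eta\in(0,|sv(\sigma)|)$ there exists $C>0$ with $\|(D\varphi^t)^{\wedge 2}|_{\Lambda^2 E^{cs}_\sigma}\|\le Ce^{-\eta t}$ for all $t\ge 0$; by the identity above this gives $\|h^t_-\cdot\psi^t_\cN|_E\|\le Ce^{-\eta t}$ uniformly over $\PP E^{cs}_\sigma$, which is the required uniform contraction.

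The main technical point I anticipate is the possible non-diagonalizability of $D\varphi^t|_{E^{cs}_\sigma}$: a nontrivial Jordan structure introduces polynomial-in-$t$ factors both in $h^t_\sigma(L)$ for $L$ off an eigenline and in the norm of $(D\varphi^t)^{\wedge 2}$, and I would need to verify that these polynomial factors are absorbed by any choice of $\eta$ strictly smaller than $|sv(\sigma)|$. A secondary point is that the wedge identity is derived at a single $L_\sigma$, so I would rely on continuity of the splitting and of the cocycle in $L_\sigma$ together with compactness of $\PP E^{cs}_\sigma$ to conclude that the constants $C$ and $\eta$ can be chosen independently of $L_\sigma$.
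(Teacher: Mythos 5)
Your argument is correct. The paper does not actually prove this lemma---it is quoted from Corollary 66 of \cite{BdL}---but your wedge-product computation is essentially the mechanism behind that result: over a line $L_\sigma\subset E^{cs}_\sigma$ the rescaled flow $h^t_\sigma\cdot\psi^t_\cN$ on the quotient of $E^{cs}_\sigma$ by $L_\sigma$ is exactly the action of $D\varphi^t$ on the $2$-planes of $E^{cs}_\sigma$ containing $L_\sigma$, and the Lorenz-like condition $sv(\sigma)=\lambda_s+\lambda_{s+1}<0$ makes all such areas contract uniformly; Jordan blocks only cost a subexponential factor, as you note, and since your final bound is by the operator norm of $\Lambda^2 D\varphi^t$ on $\Lambda^2E^{cs}_\sigma$, uniformity in $L_\sigma$ is automatic and the closing compactness/continuity remark is not needed. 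Two points deserve to be made explicit. First, you tacitly take $E$ to be the image of $E^{cs}_\sigma$ in $\cN_{L_\sigma}$; this is justified because that image, together with the image of $E^{uu}_\sigma$, is an invariant splitting of index $s$ over the compact $\varphi^t_\PP$-invariant set $\PP E^{cs}_\sigma$ which is dominated (using $\lambda_{s+1}<\lambda_{s+2}$ and the fixed angle between $E^{cs}_\sigma$ and $E^{uu}_\sigma$ at $\sigma$), hence coincides with the splitting in the statement by uniqueness of dominated splittings of prescribed index. Second, your identification $h^t_-=h^t_\sigma$ over $\PP E^{cs}_\sigma$ requires $\sigma\in S_-$, i.e.\ that $S_-$ denotes the singularities with \emph{negative} saddle value; the paper's sentence defining $S_\pm$ has the signs the other way around, which must be a typo (with the literal reading $h^t_-\equiv 1$ over $\PP E^{cs}_\sigma$ and the conclusion would fail, e.g.\ for $L_\sigma$ the strongest stable direction), so the convention you adopt is the intended one.
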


Let us observe that the singularities in this lemma have negative saddle value, and the dimension of the stable manifold of the singularity is $\dim(E^{cs})-1$, in this splitting.  
An analogous statement can be made for Lorenz-like singularities with positive saddle value  and   $(h^t_{+}\cdot\psi_t|_{F})$.

A similar version of the following lemma is found in \cite{BdL}.
Since the hypotheses are stated differently and this produces a small effect on the proof, we re-do the proof here.  

\begin{lemm}\label{faltalabel}
For $X\in\cX^1(M)$, let $C$ be a nontrivial chain recurrence class. Let $\cU$ be a neighborhood of $X$ and $U$ a neighborhood of $C$  in which $X$ is homogeneous. Then, by reducing $U$ if necessary, the lifted maximal invariant set $B(X,\Lambda)$ of $X$ in $U$ has a dominated splitting $\cN=E\oplus_{\prec} F$ for the
extended linear Poincar\'e flow. Thus, $E$ extends the stable bundle for every periodic orbit $\gamma\subset U$ of any $Y\in\cU$.
\end{lemm}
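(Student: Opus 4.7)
The strategy is to build the splitting $E\oplus F$ first over the lifts of periodic orbits, then extend it to all of $B(X,\Lambda)$ by a compactness/closure argument. The homogeneity hypothesis says that every periodic orbit $\gamma$ of any $Y\in\cU$ contained in $U$ has a common periodic index $i$. Applying Lemma~\ref{l.uniformlyattheperiod} to the star flow $X$ (recall that homogeneous implies star), the linear Poincar\'e flow $\psi^t_Y$ on each such $\gamma$ admits a dominated splitting $N=N_s\oplus N_u$ of dimensions $(i,d-1-i)$ with \emph{uniform} constants $\eta,T$ independent of $Y$ and $\gamma$. Lifting $\gamma$ by $S(x)=\langle Y(x)\rangle\in \PP M$ and passing to the quotient $\cN$, we obtain a uniformly dominated splitting $\cN=E\oplus_{\prec}F$ on $\widetilde{\gamma}$ for the extended linear Poincar\'e flow $\psi^t_{\cN}$, with the same constants.

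\paragraph{Extending to $B(X,\Lambda)$.} Since dominated splittings with uniform constants are closed under Hausdorff convergence in the Grassmannian, the bundles $E,F$ extend continuously to the closure $K:=\overline{\bigcup_{\gamma} \widetilde{\gamma}}\subset \PP M$, where the union runs over all periodic orbits $\gamma\subset U$ of vector fields $Y\in\cU$. It remains to prove $B(X,\Lambda)\subset K$ (after shrinking $U$ if necessary). For the regular part $\widetilde{\Lambda}$, I would invoke the ergodic closing lemma (Lemma~\ref{Erg}): every $\psi^t_{\cN}$-invariant measure on $\widetilde{\Lambda}$ projects to an invariant measure on $\Lambda$, which concentrates on $Sing(X)\cup\Sigma(X)$; lifts of strongly closable regular points are limits of lifted periodic orbits of perturbations $Y\in\cU$ with the same index (by homogeneity), so they lie in $K$, and their union is dense in $\widetilde{\Lambda}$ by invariance. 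For the projective center spaces $\PP^c_{\sigma,\Lambda}$ at each singularity $\sigma\in\Lambda\cap Sing(X)$, we use that $\sigma$ is Lorenz-like by Corollary~\ref{c.lorenzlike}, so Lemma~\ref{lorenzlike-cocycle} furnishes a dominated splitting of $\cN$ over $\PP E^{cs}_\sigma$ (or $\PP E^{cu}_\sigma$) of the correct dimension; Remark~\ref{sillas} together with Lemma~\ref{GY} then forces the approximating periodic orbits accumulating at $\sigma$ to be neither contracting nor expanding, and in particular their stable bundles must converge into the Lorenz-like subspace determined by $Ind_p(\sigma)=i$. Thus the two constructions glue continuously.

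\paragraph{Conclusion and main obstacle.} Once $B(X,\Lambda)\subset K$ is established, the continuous bundles $E,F$ inherited from $K$ give the desired dominated splitting, and the construction is tautologically compatible with the periodic orbit stable bundle on each $\widetilde{\gamma}$. The delicate step is the matching at singularities: proving that, under the homogeneity hypothesis and Lemma~\ref{GY}, the limit of the stable bundles of periodic orbits of nearby flows accumulating at $\sigma$ necessarily lands inside the $Ind_p(\sigma)$-dimensional subbundle provided by Lemma~\ref{lorenzlike-cocycle} rather than in a different invariant subspace of $E^c_{\sigma,\Lambda}$. This requires combining the uniform index and the uniform domination of Lemma~\ref{l.uniformlyattheperiod} with the non-contracting/non-expanding property of Remark~\ref{sillas}, so that the limiting splitting cannot collapse into a strong stable subspace nor miss the center-stable Lorenz-like splitting at $\sigma$.
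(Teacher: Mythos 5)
Your overall architecture (uniform domination on lifted periodic orbits via Lemma~\ref{l.uniformlyattheperiod}, pass to a limit set, treat the singular fibers with Corollary~\ref{c.lorenzlike} and Lemma~\ref{lorenzlike-cocycle}) is the right one, but there are two genuine gaps. First, your argument that $B(X,\Lambda)\subset K$ over the regular part does not work: the ergodic closing lemma (Lemma~\ref{Erg}) is purely measure-theoretic, so ``full measure for every invariant measure'' gives no topological density of the lifted strongly closable points in $\widetilde{\Lambda}$ (indeed $\Lambda$, being a maximal invariant set, contains non-recurrent orbits charged by no invariant measure, and even inside the class $C$ the closure of $\Sigma(X)\cap C$ need not be all of $C$); the phrase ``their union is dense in $\widetilde{\Lambda}$ by invariance'' is exactly the unproved step. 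The paper avoids this entirely by invoking Lemma~\ref{ConConLem} (Crovisier), legitimate here because homogeneous flows are star flows and hence satisfy condition (A): one gets a single sequence of periodic orbits $\gamma_n$ of $Y_n\to X$ with $\gamma_n\to_H C$, so the Hausdorff limit of the lifted orbits already contains the lift of the whole (chain recurrent part of the) class, and the uniform domination passes to it; the splitting is then only claimed over $\widetilde{\Lambda_r}$ and ultimately $B(X,C)$, not over arbitrary wandering pieces of $\widetilde{\Lambda}$. Relatedly, your set $K$ mixes lifted orbits of different vector fields $Y$, so it is not invariant for the extended linear Poincar\'e flow of $X$; the domination statement only makes sense on the $X$-invariant limit set, which is how the paper phrases it.

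Second, the gluing at singularities, which you yourself flag as the delicate point, is left unproved, and the tool you propose (Remark~\ref{sillas} plus Lemma~\ref{GY}) cannot do it: non-contracting/non-expanding of the approximating periodic orbits only constrains the index, not where the limiting stable bundle sits inside $T_\sigma M$. The paper's argument is different and is the actual content of the step: by homogeneity the periodic index of each Lorenz-like singularity equals the common index of the periodic orbits, so the splitting over $\widetilde{\Lambda_r}$ and the splitting over $\PP_\sigma E^{cs}$ (resp.\ $\PP_\sigma E^{cu}$) furnished by Lemma~\ref{lorenzlike-cocycle} have the \emph{same} index; their domains intersect nontrivially because, $C$ being nontrivial, regular orbits of $C$ accumulate on the stable and unstable manifolds of $\sigma$, so directions in $\PP_\sigma E^{cs}$ occur in $\widetilde{\Lambda_r}$; and then the uniqueness of dominated splittings of prescribed index (\cite{BDV}) forces the two splittings to coincide on the overlap, yielding one dominated splitting over $\widetilde{\Lambda_r}\cup\bigcup_{\sigma\in S_-}\PP_\sigma E^{cs}\cup\bigcup_{\sigma\in S_+}\PP_\sigma E^{cu}\supset B(X,C)$. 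Until you replace the ergodic-closing step by the Hausdorff approximation of Lemma~\ref{ConConLem} and supply this same-index/nonempty-overlap/uniqueness argument at the singularities, the proof is incomplete.
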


\begin{proof}

From Lemma~\ref{ConConLem}, for every $C\in U$ and every $n$, there are vector fields $Y_n\in\cU$  and periodic orbits $\gamma_n\in U$ such that $\gamma_n\to_{H} C$ where $.\to_{H}$ denotes the convergence in the Hausdorff topology. 
The homogeneous property  asserts that all $\gamma_n$  are hyperbolic and of the same periodic index. Thus, the periodic orbits $L_n=\set{<Y_n(x)>\,\text{ such that } x\in\gamma_n}$  are hyperbolic for the extended linear Poincar\'e flow with the same index for all $n$.  Therefore,  they induce a dominated splitting in their closure with the same index.  Now, by reducing $U$ if necessary, and letting  $\Lambda$ be the maximal invariant set in $U$, this domination extends to  $$\widetilde{\Lambda_r}=\overline{\big\{<X(x)>: x\in \Lambda\cap \mathfrak{R}(M)\setminus Sing(x)\big\}}\,.$$
If $C$ is not singular, we are done, and this completes the proof.

Suppose that $C$ exhibits singularities.If $\sigma$ is a singularity with negative saddle value, Lemma~\ref{lorenzlike-cocycle} implies that there exists a dominated splitting for the extended linear Poincaré flow $\cN_{L_{\sigma}}=E\oplus E_{\sigma}^{uu}$, over $L_{\sigma}\in \PP_{\sigma} E^{cs}$. Since the periodic index of the singularities is the same as the index of periodic orbits, the splitting over $\widetilde{\Lambda_r}$ and  $L_{\sigma}\in \PP_{\sigma} E^{cs}$ exhibit the same index.

The intersection of both spaces is nonempty since $C$ is nontrivial. This is because there are regular orbits approximating the singularity accumulating in its stable manifold, and the same is true for the unstable manifold.
 The uniqueness of the dominated splittings (see \cite{BDV}) for a prescribed dimensions implies that these dominated splittings coincide at the intersection.
Analogous reasoning applies to the singularities of positive saddle value. Thus, considering all singularities, there is a
dominated splitting of index $s$ over
$$\widetilde{\Lambda_r}\cup\bigcup_{\sigma\in S_{-}\cap\Lambda}\PP_{\sigma} E^{cs}\cup\bigcup_{\sigma\in S_{+}\cap\Lambda}\PP_{\sigma} E^{cu}\,.$$
Since $$B(X,C)\subset \overline{\big\{<X(x)>: x\in \Lambda\cap\mathfrak{R}(M)\setminus Sing(X)\big\}}\bigcup_{\sigma\in 
S_{-}\cap\Lambda}\PP_{\sigma} E^{cs}\bigcup_{\sigma\in S_{+}\cap\Lambda}\PP_{\sigma} E^{cu}\,$$

Then $B(X,C)$ has a dominated splitting of index $s$.

\end{proof}

From Remark~\ref{sillas} we have that $s>0$ 

The following Theorem from \cite{GSW} describes all ergodic measures for a star flow. 
\begin{theo}[\label{5.6}Lemma  5.6 \cite{GSW}]
Let $X\in\mathcal{X}^1(M)$ be a star flow. Any invariant ergodic measure $\mu$ of the flow $\phi^t$ is a hyperbolic measure. Moreover, if $\mu$ is not the atomic measure
on any singularity, then $supp(\mu) \cap H(P)\neq \emptyset$, where $P$ is a periodic orbit with the index of $\mu$, i.e., the number of negative Lyapunov
exponents of $\mu$(with multiplicity).
\end{theo}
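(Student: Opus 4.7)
The plan is to handle hyperbolicity and the homoclinic-class conclusion separately, leveraging the star hypothesis to extract uniform bounds that survive limit operations. First I would dispose of the trivial case: if $\mu$ is atomic on a singularity $\sigma$, then the star condition forces $\sigma$ to be hyperbolic, so $\mu$ is automatically a hyperbolic measure and there is nothing further to prove. So from here on assume $\mu(Sing(X))=0$.

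Next I would prove hyperbolicity of $\mu$ using the ergodic closing lemma (Lemma~\ref{Erg}) together with the uniform hyperbolicity at the period (Lemma~\ref{l.uniformlyattheperiod}). Since $\mu$ gives no mass to singularities, $\mu$-almost every point lies in $\Sigma(X)$. For such a point $x$ and each $n$, I would pick a nearby vector field $Y_n\to X$ with a periodic orbit $\gamma_n$ whose orbit segment $\delta$-shadows a long piece of the $X$-orbit of $x$, where the length of the shadowed segment can be chosen to grow so that its Birkhoff averages of $\log\|\psi_t\|_{N_s}\|$ and $\log\min(\psi_t\|_{N_u})$ along the piece of the orbit of $x$ approximate the Lyapunov exponents given by Oseledets for $(\mu,\psi^t)$. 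Because $Y_n$ lies in the star neighborhood, Lemma~\ref{l.uniformlyattheperiod} gives a uniform $\eta>0$ such that the stable summand contracts and the unstable summand expands at least at rate $\eta$ along the entire period of $\gamma_n$. Passing to the limit in the normal-bundle Birkhoff averages yields that every Lyapunov exponent of $\mu$ along $\cN$ has modulus $\geq \eta$. Together with the trivial zero exponent along $X$, this is exactly the definition of a hyperbolic invariant measure for the flow.

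For the homoclinic class statement, let $s$ be the index of $\mu$ (the dimension of $N_s$ above, equivalently the number of negative normal Lyapunov exponents). I would apply a Pesin--Katok shadowing argument to the now-hyperbolic measure $\mu$ at $X$ itself: for any $\varepsilon>0$ there exists a periodic point $p\in M$ for $X$ whose orbit $P$ lies $\varepsilon$-close (in Hausdorff distance) to a positive-measure piece of $supp(\mu)$ and whose index equals $s$. The star hypothesis ensures the hyperbolicity of $P$ is uniform, and by the stable/unstable manifold theorem one obtains local invariant manifolds of $P$ of definite size. Then, using Pesin boxes for $\mu$ and the fact that generic points for $\mu$ return densely to such boxes, two long returns of $P$ to a Pesin box produce transverse heteroclinic intersections, which a standard Hayashi-style connecting lemma argument (available because $X$ is a star flow and the linear Poincar\'e flow over $P$ is uniformly hyperbolic) turns into a homoclinic intersection of $P$ with itself. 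Consequently $p\in H(P)$, and since $p$ is arbitrarily close to $supp(\mu)$, taking a limit shows $supp(\mu)\cap H(P)\neq\emptyset$ for some periodic orbit $P$ of index $s$.

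The main obstacle I expect is the second step. The hyperbolicity proof is essentially a clean application of the ergodic closing lemma plus Lemma~\ref{l.uniformlyattheperiod}, and the delicate point there is only that one must read the estimates on the normal bundle $\cN$ rather than on $TM$, so that the flow direction (which may be badly distorted near singularities) is quotiented out — but by hypothesis $\mu(Sing(X))=0$ so the normal bundle is defined $\mu$-a.e. and Lemma~\ref{l.uniformlyattheperiod} applies verbatim. The genuinely harder step is producing the homoclinic class with \emph{matching index}: Katok's theorem alone yields a nearby periodic orbit, but matching its index to that of $\mu$ and then upgrading proximity into an actual homoclinic intersection requires the uniform normal-bundle domination across the whole star neighborhood, i.e., a Liao-type closing argument rather than a naive $C^0$ shadowing. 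Once that index control is in place, the transverse intersection that defines $H(P)$ follows from the definite size of the stable and unstable manifolds of $P$ coming from the uniform estimates of Lemma~\ref{l.uniformlyattheperiod}.
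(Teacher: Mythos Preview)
The paper does not prove this statement; it is quoted verbatim as Lemma~5.6 of \cite{GSW} and used as a black box (in the proof of Lemma~\ref{l.threehom}). So there is no in-paper proof to compare your proposal against.

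That said, your first step (hyperbolicity of $\mu$) is essentially the argument in \cite{GSW}: ergodic closing lemma plus the uniform estimates of Lemma~\ref{l.uniformlyattheperiod} on the normal bundle, passed to the limit. That part is fine.

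Your second step has a genuine gap. You produce, for each $\varepsilon>0$, a periodic orbit $P=P_\varepsilon$ of index $s$ that is $\varepsilon$-close to a piece of $supp(\mu)$, and then observe $p\in H(P)$. But $p\in H(P)$ is automatic (the orbit $P$ is always contained in its own homoclinic class), so that line carries no information. The real issue is that $P$ depends on $\varepsilon$: from ``$p_\varepsilon\in H(P_\varepsilon)$ and $p_\varepsilon\to supp(\mu)$'' you cannot conclude $supp(\mu)\cap H(P)\neq\emptyset$ for a \emph{fixed} $P$, because the homoclinic classes $H(P_\varepsilon)$ may all be distinct and need not accumulate on $supp(\mu)$ in the right way. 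The argument in \cite{GSW} does not go through Pesin--Katok shadowing plus a connecting lemma; it uses Liao's selecting/shadowing lemma to obtain, from a $\mu$-generic orbit, a genuine periodic orbit $P$ of $X$ of index $s$ whose local stable and unstable manifolds (of uniform size, coming from the star estimates) intersect the Pesin stable and unstable sets of $\mu$-typical points. That forces transverse intersections between $W^s(P)$, $W^u(P)$ and the orbit closure, hence $supp(\mu)\cap H(P)\neq\emptyset$ for that single $P$. Your sketch conflates ``nearby'' with ``homoclinically related'' and never pins down a fixed $P$; that is the missing idea.
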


We  restate another Lemma  (73) from \cite{BdL}, changing the hypothesis of being  \emph{generic}  for the concrete generic condition, which is being homogeneous. Since this Lemma is fundamental for Theorem~\ref{T.main4} we will re-do the proof here, to show that no other hypothesis is needed.
Additionally, we restate the claims in the proof as separated technical propositions. If no generic hypothesis is used, we will not provide a proof.

\begin{prop}[\label{Claim1} Claim 1, Lemma 73 in  \cite{BdL}]Let $X\in\mathcal{X}^1 (M)$ and a chain recurrent class $C$ of $X$.
Suppose that for any neighborhood $U$ of $C$, $B(X,\Lambda)$ is  such that the normal space has a dominated  splitting $\cN_{B(X,\Lambda)}=E\oplus_{\prec}F$ but the space $E$ is not uniformly contracting for the flow  $h_{-}^t\cdot\psi^t_{\cN}$
Then, for every $T>0$, there
exists an ergodic invariant measure $\mu'_T$ whose support is contained in $$B(X,C)=\bigcup_{\sigma\in Sing(X)\cap C}\PP^c_s\cup \tilde C$$ such that
$$\int
\log\norm{h^T_{-}.\psi^T_{\cN} \mid_{E}} d\mu'_T(x)\geq 0\,,$$
where $\tilde C=\overline{S(C\setminus Sing(X))}$. 

 \end{prop}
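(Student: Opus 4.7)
The plan is to apply the standard principle that a continuous linear cocycle over a compact invariant set which fails to be uniformly contracting admits an invariant ergodic measure whose top Lyapunov exponent is non-negative, and then to localize this measure onto $B(X,C)$ using the filtrating basis of $C$. Fix $T>0$ and a decreasing basis of filtrating neighborhoods $U_1\supset U_2\supset\cdots$ of $C$ with $\bigcap_n U_n=C$, and let $\Lambda_n$ be the maximal invariant set of $X$ in $U_n$. The hypothesis then applies to every $U_n$, so the scalar cocycle
$$g^T(z)=\log\bigl\|(h_-^T\cdot\psi^T_{\cN}|_E)(z)\bigr\|$$
fails to satisfy a uniform contraction estimate over $B(X,\Lambda_n)$. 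A critical point is that $g^T$ is continuous on each $B(X,\Lambda_n)$ (including over the projective fibers $\PP^c_{\sigma,\Lambda_n}$); this is exactly the feature that the factor $h^t_-$ from Corollary~66 of \cite{BdL} is designed to provide.

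I would next build an invariant measure on each $B(X,\Lambda_n)$ via empirical averages. Setting $S_k(z)=\sum_{i=0}^{k-1}g^T(\varphi^{iT}_{\PP}(z))$, Fekete's lemma applied to the sub-additive sequence $k\mapsto\sup_{z\in B(X,\Lambda_n)} S_k(z)$ shows that failure of uniform contraction is equivalent to $\limsup_k \sup_z S_k(z)/k\geq 0$. Hence for each $n$ one can find $z_n\in B(X,\Lambda_n)$ and $k_n\geq n$ with $S_{k_n}(z_n)\geq -1$. I then form the empirical measures $\nu_n=\frac{1}{k_n}\sum_{i=0}^{k_n-1}\delta_{\varphi^{iT}_{\PP}(z_n)}$, so that $\int g^T\,d\nu_n\geq -1/k_n$. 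A standard Krylov--Bogolyubov argument shows that any weak-$*$ subsequential limit $\nu$ is $\varphi^T_{\PP}$-invariant and, by continuity of $g^T$, satisfies $\int g^T\,d\nu\geq 0$. Because $B(X,\Lambda_m)$ is closed and $\operatorname{supp}(\nu_n)\subset B(X,\Lambda_n)\subset B(X,\Lambda_m)$ whenever $n\geq m$, the limit $\nu$ is supported in $\bigcap_m B(X,\Lambda_m)=B(X,C)$.

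The measure $\nu$ is generally not ergodic, so I would finish by ergodic decomposition: writing $\nu=\int\mu_\omega\,dm(\omega)$ with ergodic $\mu_\omega$, the inequality $\int\!\bigl(\int g^T\,d\mu_\omega\bigr)\,dm(\omega)\geq 0$ forces an $m$-positive set of components $\mu_\omega$ to satisfy $\int g^T\,d\mu_\omega\geq 0$. Each such component is supported in $\operatorname{supp}(\nu)\subset B(X,C)$, and any one of them serves as $\mu'_T$. The main obstacle in this plan is the continuous extension of $g^T$ across the singular fibers $\PP^c_{\sigma}\subset B(X,C)$: everything else is routine sub-additivity plus weak-$*$ compactness. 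The continuity, however, is delivered by the construction of $h^t_-$ (together with the choice of Riemannian metric making $\|D\varphi^t|_{L_x}\|=1$ outside fixed neighborhoods of $\operatorname{Sing}(X)$), which is precisely why this multiplicative cocycle was introduced in \cite{BdL}.
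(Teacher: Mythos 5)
Your proposal is correct and follows essentially the same route as the paper: on each nested filtrating neighborhood, failure of uniform contraction produces measures with non-negative average of $\log\norm{h_{-}^{T}\cdot\psi^{T}_{\cN}\mid_{E}}$, a weak-$*$ limit localizes the support to $B(X,C)=\bigcap_n B(X,\Lambda_n)$, and ergodic decomposition extracts the ergodic component $\mu'_T$. The only difference is that the paper obtains the per-neighborhood step by citing Lemma I.5 of \cite{Ma2}, whereas you reprove that step by hand via subadditivity (Fekete) and empirical Krylov--Bogolyubov measures, which is the same idea unpacked.
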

  Here, the idea is that since $E$ is not uniformly contracting, $E$ is not uniformly contracting,Then from Lemma I.5 in \cite{Ma2}  then there is an ergodic measure with support in over $B(X,\Lambda)$ such that $$\int
\log\norm{h^T_{-}.\psi^T_{\cN} \mid_{E}} d\mu(x)\geq 0\,.$$
However, this measure may not be supported in $\bigcup_{s\in Sing(x)\cap C}\PP^c_s\cup \tilde C$. Since this holds for any neighborhood and there exist neighborhoods $U_n\to C$, this defines a sequence of ergodic measures with the property above and converging to an invariant measure with support in $\bigcup_{\sigma\in Sing(X)\cap C}\PP^c_s\cup \tilde C$. Therefore, we can decompose  this measure as a sum of ergodic measures, which proves the proposition. 

The following proposition shows that $h^T_{-}$ does not affect the hyperbolicity of periodic orbits.

\begin{prop}[\label{Claim2}Claim 2, Lemma 73 in  \cite{BdL}] For $X\in\mathcal{X}^1(M)$, let $\nu_n$ be a measure supported on a  periodic orbit $\gamma_n$ with period $\pi( \gamma_n) >T$. Suppose $S$ is any subset of $Sing(X)$ and  $h^t_{S}=\prod_{\sigma\in S}h^t_{\sigma}$. Then $\int\log h^T_{-} d\nu_n(x)=0$.
\end{prop}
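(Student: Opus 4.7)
The plan is to reduce the integral for $h^T_S$ to integrals for individual $h^T_\sigma$ via $\log h^T_S = \sum_{\sigma\in S}\log h^T_\sigma$, and then exploit the triviality of $h^\pi_\sigma$ over a full period of $\gamma_n$, where $\pi=\pi(\gamma_n)$.

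The key preliminary identity is $h^\pi_\sigma(L_x)=1$ for every $x\in\gamma_n$. Because $\varphi^\pi(x)=x$, the points $x$ and $\varphi^\pi(x)$ lie in the same region relative to $U_\sigma$, so one of the two explicit clauses from Corollary 1 of \cite{BdL} applies: if $x\notin U_\sigma$ then $h^\pi_\sigma(L_x)=1$ directly, while if $x\in U_\sigma$ then $h^\pi_\sigma(L_x)=\|D\varphi^\pi|_{L_x}\|$, which equals $1$ because the line $L_x=\langle X(x)\rangle$ is fixed pointwise by $D\varphi^\pi$ (its action sends $X(x)\mapsto X(\varphi^\pi(x))=X(x)$, independently of the Riemannian metric).

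With $h^\pi_\sigma(L_x)=1$ in hand, I would compute directly using $\nu_n=\tfrac{1}{\pi}\int_0^\pi \delta_{\varphi^s(x)}\,ds$ together with the multiplicative cocycle relation $h^{s+T}_\sigma(L_x)=h^T_\sigma(L_{\varphi^s(x)})\cdot h^s_\sigma(L_x)$. Taking logarithms, integrating in $s\in[0,\pi]$, and substituting $u=s+T$ in one of the resulting integrals yields
$$\int_0^\pi \log h^T_\sigma(L_{\varphi^s(x)})\,ds \;=\; \int_\pi^{\pi+T}\log h^u_\sigma(L_x)\,du \;-\; \int_0^T\log h^s_\sigma(L_x)\,ds.$$
Applying the cocycle once more with $h^\pi_\sigma(L_x)=1$ gives $h^u_\sigma(L_x)=h^{u-\pi}_\sigma(L_x)$ for $u\in[\pi,\pi+T]$; translating by $v=u-\pi$ shows the two integrals on the right are equal, so they cancel.

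Dividing by $\pi$ and summing over $\sigma\in S$ yields $\int\log h^T_S\,d\nu_n=0$, which matches the stated conclusion. The only conceptual step is the identity $h^\pi_\sigma(L_x)=1$, which depends on noticing that both defining clauses of the cocycle collapse to $1$ on a periodic orbit; after that the argument is a formal manipulation of the cocycle. The hypothesis $\pi(\gamma_n)>T$ plays no role in this single-period computation and is presumably there for the use of this proposition in the enclosing lemma.
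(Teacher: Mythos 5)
Your proof is correct and is essentially the argument behind Claim 2 of Lemma 73 in \cite{BdL}, which this paper cites without reproducing: the entire content is the identity $h^{\pi(\gamma_n)}_{\sigma}(L_x)=1$ on the lifted periodic orbit (both clauses of the quoted Corollary collapse to $1$, since $D\varphi^{\pi(\gamma_n)}$ fixes $X(x)$ and hence has norm $1$ on $L_x=\langle X(x)\rangle$), after which the cocycle relation and the invariance of $\nu_n$ force the integral to vanish. The only point to make explicit is that $\nu_n$ must be read as the unique $\varphi^t$-invariant probability measure on $\gamma_n$, lifted via $x\mapsto L_x=\langle X(x)\rangle$ — your formula $\nu_n=\tfrac{1}{\pi}\int_0^{\pi}\delta_{\varphi^s(x)}\,ds$ is exactly that — and, as you observe, the hypothesis $\pi(\gamma_n)>T$ is not needed for this computation.
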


In the following claim in \cite{BdL} there is a somewhat confusing use of the generic hypotheses. We rewrite the statement and the proof here.

\begin{prop}[\label{Claim3}Claim 3, Lemma 73 in  \cite{BdL}]
Let $X\in\mathcal{X}^1 (M)$ and $C$ be a chain recurrent class of $X$.
Let $\cU$ be a neighborhood of $X$ and $U$ a neighborhood of $C$ in which $X$ is homogeneous in $C$. 
Suppose that for every $T>0$, there
exists an ergodic invariant measure $\mu'_T$ whose support is contained in $$\bigcup_{\sigma\in Sing(X)\cap C}\PP^c_{\sigma}\cup \tilde C$$ such that
$$\int
\log\norm{h^T_{-}.\psi^T_{\cN} \mid_{E}} d\mu(x)'_T\geq 0\,,$$
where $\tilde C=\overline{S(C\setminus Sing(X))}$. 
Then, there is a singular point $\sigma_i\subset C$ such that $\mu_T$ is supported on $\PP^c_{\sigma_i}$.

\end{prop}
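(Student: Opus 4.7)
The plan is to argue by contradiction: assuming $\mu'_T$ is not supported on any single $\PP^c_{\sigma_i}$, I will build a sequence of periodic orbit measures whose uniform hyperbolicity at the period, combined with Proposition~\ref{Claim2}, violates the inequality $\int\log\|h^T_-\cdot\psi^T_\cN|_E\|\,d\mu'_T\geq 0$.

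First, I would locate the support of $\mu'_T$. Each $\PP^c_{\sigma_i}$ is a compact $\varphi^t_\PP$-invariant set, so the ergodicity of $\mu'_T$ forces, under the contrary assumption, $\mu'_T(\PP^c_{\sigma_i})=0$ for every $\sigma_i\in Sing(X)\cap C$. Hence the support of $\mu'_T$ lies in $\tilde C\setminus\bigcup_i\PP^c_{\sigma_i}$. Assuming $C$ is nontrivial (the trivial case is immediate), Corollary~\ref{c.lorenzlike} makes every $\rho\in Sing(X)\cap C$ Lorenz-like, and the characterization of the escaping (un)stable space together with Lemma~\ref{4.7} shows that regular orbits of $C$ accumulate on $\rho$ only along directions in $E^c_\rho$. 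Thus $\tilde C\cap \PP_\rho M\subset \PP^c_\rho$, which forces $\pi_*\mu'_T(Sing(X))=0$, and Lemma~\ref{Erg} applied to $\pi_*\mu'_T$ then gives $\pi_*\mu'_T(\Sigma(X))=1$.

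Next I would pick a $\pi_*\mu'_T$-typical strongly closable point $x$ that is also Birkhoff-generic, and use strong closability to produce $Y_n\to X$ in $\cU$ together with periodic orbits $\gamma_n$ of $Y_n$ with $\pi(\gamma_n)\to\infty$ whose $Y_n$-orbits shadow the $X$-orbit of $x$ on $[0,\pi(\gamma_n)]$. For $n$ large one has $\gamma_n\subset U$, so the homogeneity hypothesis yields $Ind(\gamma_n)=s=\dim E$; by the same domination-uniqueness argument used in Lemma~\ref{faltalabel} the stable bundle of $\gamma_n$ for $\psi^t_{Y_n}$ extends $E$ on the lift $L_n=S(\gamma_n)$, and the lifted normalized probabilities $\tilde\nu_n$ on $L_n$ converge weakly$^*$ to $\mu'_T$. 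Lemma~\ref{l.uniformlyattheperiod} then supplies uniform constants $\eta>0$ and $T_0>0$ such that, for $T\geq T_0$ and $n$ large,
$$\int\log\|\psi^T_{Y_n}|_E\|\,d\tilde\nu_n\;\leq\;-\eta T,$$
while Proposition~\ref{Claim2} furnishes $\int\log h^T_-\,d\tilde\nu_n=0$. Adding the two estimates gives $\int\log\|h^T_-\cdot\psi^T_\cN|_E\|\,d\tilde\nu_n\leq -\eta T<0$, and the weak$^*$ limit yields $\int\log\|h^T_-\cdot\psi^T_\cN|_E\|\,d\mu'_T\leq -\eta T$, contradicting the hypothesis.

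The main technical obstacle is the last weak$^*$ passage, because the cocycle $h^T_-$ is built from fixed neighborhoods $U_\sigma$ and is discontinuous across $\partial U_\sigma$. I would handle this by exploiting the fact that $\pi_*\mu'_T$ charges no singularity: a generic choice of the radii of the $U_\sigma$ makes $\mu'_T$ give zero mass to the discontinuity locus of $\log h^T_-$, so that $\log h^T_-$ is $\mu'_T$-a.e.\ continuous and the Portmanteau theorem transfers the inequality to $\mu'_T$. Everything else is a clean assembly of the ergodic closing lemma, the homogeneity hypothesis, and the uniform period hyperbolicity of Lemma~\ref{l.uniformlyattheperiod}.
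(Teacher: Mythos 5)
Your argument is essentially the paper's own proof, run in the mirror direction: the paper takes a Birkhoff-generic, strongly closable point for the projected measure, transfers the non-negative Birkhoff sums of $\log\norm{h^T_-\cdot\psi^T\mid_E}$ to the shadowing periodic orbits of nearby $Y_m$, and contradicts the fact (Lemma~\ref{faltalabel} plus hyperbolicity at the period) that $E=N^s_{\gamma_m}$ must be contracted there; you instead push the uniformly negative periodic averages (Lemma~\ref{l.uniformlyattheperiod} plus Proposition~\ref{Claim2}) through a weak$^*$ limit back to $\mu'_T$. Same lemmas, same contradiction, so I count this as the same route. Two remarks. First, your preliminary step showing $\tilde C\cap\PP_\rho M\subset\PP^c_\rho$ (via Corollary~\ref{c.lorenzlike} and Lemma~\ref{4.7}), hence $\pi_*\mu'_T(Sing(X))=0$, is a point the paper's proof passes over silently when it asserts the projected measure ``weighs zero on the singularities''; making it explicit is a genuine improvement, though you should note it needs the standard cone/compactness argument near $\rho$, not just the definition of the escaping spaces.

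Second, there is one real omission: you assert that strong closability produces periodic orbits with $\pi(\gamma_n)\to\infty$, but this fails if the chosen closable Birkhoff-generic point is itself periodic, and you need large periods both for $\gamma_n\subset U$-type localization and, crucially, to apply Lemma~\ref{l.uniformlyattheperiod} (which controls averages only when the period exceeds $T$) and to get $\tilde\nu_n\to\mu'_T$. The paper disposes of this case first: by homogeneity every periodic orbit of $X$ in $U$ is hyperbolic with stable bundle extending $E$, so it contracts $E$ at the period, which is incompatible with the non-negative Birkhoff average; hence the generic closable point cannot be periodic. Your machinery handles this case by the same computation (Proposition~\ref{Claim2} plus contraction of $N^s=E$ at the period applied directly to the $X$-periodic orbit), but the case must be stated and excluded before you can write $\pi(\gamma_n)\to\infty$. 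With that one-line repair, and with the observation that the cocycle of Corollary~1 of \cite{BdL} is already continuous on $B(X,\Lambda)$ (so the delicate point in your weak$^*$ passage is really the comparison of the $Y_n$-objects with the $X$-objects near the singularities, handled by the continuity of $\psi^t_{\cN}$ on $\PP M$ and the continuous extension of $E$, exactly as in the paper's transfer of Birkhoff sums), your proof is correct.
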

\begin{proof}
We prove this by contradiction. Suppose that  $$\mu_T(\bigcup_{\sigma_i\in Sing(X)}\PP^c_{\sigma_i})=0\,.$$
Since $S|_{C/Sing(X)}$ is a $C^1$ diffeomorphism, then it projects on $M$ to an ergodic measure $\nu$ supported on the class $C$ such that it weighs zero i in the singularities. For $\nu$, we have that $$\int \log\norm{h^T_{-}.\psi^T \mid_{E}} d\nu(x)\geq 0 \,.$$

Recall that $\psi^T $ is the linear Poincar\'{e} flow defined over the regular points of $M$, and $h^T_{-}$ can be defined as a function of $x\in M$ rather than a function of $L\in\PP M$ outside an arbitrarily small neighborhood of the singularities.

Now, Birkhoff theorem asserts that for $a$ in a $\nu$-full measure set of points  and a time $T>0$, $$\lim_{n\to\infty}\frac{1}{n}\sum_{i=0}^{n-1}\log\norm{h^{iT}_{-}.\psi^{iT} \mid_{E_{\varphi^{(i-1)T}(a)}}}=\int \log\norm{h^{T}_{-}.\psi^T \mid_{E}} d\nu(x)\,.$$
Then 
$$\lim_{n\to\infty}\frac{1}{n}\sum_{i=0}^{n-1}\log\norm{h^{iT}_{-}.\psi^{iT} \mid_{E_{\varphi^{(i-1)T}(a)}}}\geq0\,.$$
The homogeneous property implies that all periodic orbits must contract $E$ at the period, therefore, $a$ cannot be periodic.

Since $\nu$ weighs zero on the singularities, the ergodic closing Lemma~\ref{Erg} implies that $\nu(\Sigma(X)\cap C)=1$, where  $\Sigma(X)$ is the set of strongly closable points. 

For a closable $a$ satisfying the Birkhoff theorem, there is a sequence vector fields $Y_m$ converging to $X$ with a sequence of periodic points $b_m$ belonging to periodic orbits $\gamma_m$, such that $b_m\to a$ and $\gamma_m\to \mathcal{O}(a)$, where $\mathcal{O}(a)$ is the orbit of $a$.
Since $a$ is not periodic, the periods of $b_m$ tend to infinity as they approach $a$.
As a consequence, there exists $m_0$ such that if $m>m_0$, 
$$\lim_{n\to\infty}\frac{1}{n}\sum_{i=0}^{n-1}\log\norm{h^{iT}_{-}.\psi^{iT} \mid_{E_{\varphi_{Y_n}^{(i-1)T}(b_m)}}}\geq0\,.$$
 This implies that for $Y_m$ with $m>m_0$, there exists an ergodic measure $\nu_m$ supported in $\gamma_m$ such that
$$\int \log\norm{h^T_{-}.\psi^T\mid_{E}} d\nu_m(x)\geq 0 \,.$$

Since we are over a periodic orbit, Proposition \ref{Claim2} gives 
$\int\log h^T_{-} d\nu_m(x)=0$, and then

$$\int \log\norm{\psi^T \mid_{E}} d\nu_m(x)\geq 0 \,.$$
This contradicts  Lemma \ref{faltalabel} since  for $m>m_0$ large enough  $Y_m\in\cU$ and therefore  $N^s_{\gamma_m}=E$.

\end{proof}

The following Lemma  directly implies Theorem~\ref{T.main4}
 \begin{lemm}[\label{l.contraction} Lemma 7.3 in  \cite{BdL}]
For $X\in\mathcal{X}^1 (M)$ let $C$ be a chain recurrent class of $X$ and let $\cU$ be a neighborhood of $X$ and $U$ a neighborhood of $C$ in which $X$ is homogeneous in $C$.
Then, by reducing $U$ if necessary, the extended maximal invariant set $B(X,\Lambda)$ in $U$  is such that the normal space has a dominated  splitting $\cN_{B(X,\Lambda)}=E\oplus_{\prec}F$ where the space $E$ (resp. $F$) is uniformly contracting (resp. expanding) for the flow  $h_{-}^t\cdot\psi^t_{\cN}$
(resp. $h_{+}^t\cdot\psi^t_{\cN}$).
\end{lemm}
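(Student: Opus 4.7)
The plan is to apply Lemma \ref{faltalabel} together with Propositions \ref{Claim1}, \ref{Claim2}, \ref{Claim3} and Lemma \ref{lorenzlike-cocycle}. First, after shrinking $U$ if necessary, Lemma \ref{faltalabel} produces a dominated splitting $\cN=E\oplus_{\prec}F$ over $B(X,\Lambda)$, with $\dim E$ equal to the common periodic index $s$ enforced by the homogeneity hypothesis. This settles the domination half of the conclusion. What remains is to upgrade each bundle to the appropriate uniform cocycle behaviour: uniform contraction of $E$ under $h_-^t\,\psi^t_{\cN}$ and uniform expansion of $F$ under $h_+^t\,\psi^t_{\cN}$. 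I will argue the contraction in detail; the expansion follows by applying the same scheme to $-X$, under which the roles of $S_\pm$, $E$, $F$, and $h_\pm$ are interchanged.

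Assume, for contradiction, that $E$ fails to be uniformly contracted by $h_-^t\,\psi^t_{\cN}$. Proposition \ref{Claim1} then produces, for every $T>0$, an ergodic $\varphi^T_{\PP}$-invariant measure $\mu'_T$ supported in $B(X,C)$ with
\[
\int \log \norm{h_-^T\,\psi^T_{\cN}\big|_E}\,d\mu'_T(x)\;\geq\;0.
\]
Proposition \ref{Claim3} (which encodes the homogeneity of $X$ on $C$ through its use of the ergodic closing lemma and the uniform hyperbolicity at the period from Lemma \ref{l.uniformlyattheperiod}, together with Proposition \ref{Claim2}) forces this measure to live on the projective center space $\PP^c_\sigma$ of some singularity $\sigma\in C$, and Corollary \ref{c.lorenzlike} asserts that $\sigma$ is Lorenz-like.

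It remains to obtain a contradiction at $\sigma$. If $\mathrm{sv}(\sigma)<0$, then $\sigma\in S_+$, the center space equals $E^{cs}_\sigma$, and $\PP^c_\sigma\subset\PP E^{cs}_\sigma$. Lemma \ref{lorenzlike-cocycle} then gives uniform contraction of $h_-^t\,\psi_t|_E$ on $\PP E^{cs}_\sigma$, which clashes with the non-negativity of the integral for $T$ large. If $\mathrm{sv}(\sigma)>0$, then $\sigma\in S_-$ and $\PP^c_\sigma=\PP E^{cu}_\sigma$; here I would invoke the positive analogue of Lemma \ref{lorenzlike-cocycle} to get uniform expansion of $h_+^t\,\psi_t|_F$ (equivalently $\psi_t|_F$, since $h_+^t\equiv 1$ near $\sigma\in S_-$) on $\PP E^{cu}_\sigma$, and combine it with the domination $E\oplus_{\prec}F$ on $\cN$. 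Using that $\mu'_T$ must sit on a minimal $\varphi^T_{\PP}$-invariant subset of $\PP E^{cu}_\sigma$, hence on a direction corresponding to a single Lyapunov exponent $\lambda_k$ of $D\varphi^t|_{E^{cu}_\sigma}$ with $k\geq s$, one computes the top Lyapunov exponent of $h_-^T\psi^T|_E$ against $\mu'_T$ as $T(\lambda_k+\lambda_{s-1})$. The Lorenz-like inequality $\lambda_{s-1}<\lambda_s$ combined with the periodic-index identity $\mathrm{Ind}_p(\sigma)=s-1$ and the expansion of $F$ just obtained rules out any choice of $k$ compatible with the non-negativity of the integral, and we reach a contradiction. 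The main obstacle is precisely this positive-saddle-value case, since the bare statement of Lemma \ref{lorenzlike-cocycle} only provides contraction of $E$ at negative-saddle-value singularities; one must combine its positive analogue with the dominated structure on $\cN$ to exclude the ``unstable'' invariant directions from supporting a bad measure.
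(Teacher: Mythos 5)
Your overall architecture is the same as the paper's: obtain the dominated splitting of index $s$ from Lemma~\ref{faltalabel}, argue by contradiction, produce via Proposition~\ref{Claim1} an ergodic measure $\mu'_T$ with $\int\log\norm{h_-^T\psi^T_{\cN}\mid_E}d\mu'_T\geq 0$, localize it on some $\PP^c_\sigma$ via Proposition~\ref{Claim3}, invoke Corollary~\ref{c.lorenzlike} to know $\sigma$ is Lorenz-like, and treat $F$ by symmetry for $-X$. Your negative--saddle-value case is exactly the paper's argument (Lemma~\ref{lorenzlike-cocycle}), and you correctly observe that the paper's one-line conclusion hides a second case at positive--saddle-value singularities.

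However, your resolution of that second case has a genuine gap. The claimed exclusion of ``any choice of $k$'' does not follow from the ingredients you list: the Lorenz-like conditions ($\lambda_{s-1}<\lambda_s<0<\lambda_{s+1}$, $\lambda_s+\lambda_{s+1}>0$, $W^{ss}(\sigma)\cap C=\{\sigma\}$), the identity $Ind_p(\sigma)=s-1$, and the expansion of $F$ under $h_+^t\psi^t_{\cN}$ are all compatible with $\lambda_k+\lambda_{s-1}>0$ for some center exponent; for instance $\lambda_{s-1}=-1$, $\lambda_s=-\tfrac12$, $\lambda_{s+1}=2$ satisfies everything while $\lambda_{s+1}+\lambda_{s-1}=1>0$. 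So if $h_\sigma$ really were a factor of $h_-^t$ at a positive--saddle-value $\sigma$, the quantity $\log\norm{h_-^T\psi^T_{\cN}\mid_E}$ could be nonnegative on $\PP^c_\sigma$ and your computation yields no contradiction (indeed the lemma would be false under that reading). The point you are missing is the sign convention that makes the statement coherent with Lemma~\ref{lorenzlike-cocycle} and with the definition of multi-singular hyperbolicity: $h_-^t$ involves only the singularities of \emph{negative} saddle value (the sentence defining $S_\pm$ in Section~\ref{S.BasicDef} has the two signs interchanged, as the observation following Lemma~\ref{lorenzlike-cocycle} shows, since that lemma needs $h_\sigma$ as a factor of $h_-^t$ precisely at negative--saddle-value singularities). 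With that convention, at a positive--saddle-value $\sigma$ one has $h_-^t\equiv 1$ on orbits contained in $U_\sigma$; moreover on $\PP^c_\sigma\subset\PP E^{cu}_\sigma$ the bundle $E$ coincides, by uniqueness of dominated splittings of prescribed index, with the projection of $E^{ss}_\sigma$, on which $\psi^T_{\cN}$ alone is uniformly contracting (all exponents at most $\lambda_{s-1}<0$). Hence $\int\log\norm{h_-^T\psi^T_{\cN}\mid_E}d\mu'_T<0$ for $T$ large, and the contradiction is immediate, with no Lyapunov-exponent bookkeeping on the center directions. Your parenthetical ``$h_+^t\equiv1$ near $\sigma\in S_-$'' reflects the same sign confusion: it is exactly at positive--saddle-value singularities that $h_\sigma$ must enter $h_+^t$ for the expansion of $F$ to hold, as the geometric Lorenz attractor shows.
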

\begin{proof}
Suppose  by contradiction that for any neighborhood $U$ of $C$, $B(X,\Lambda)$ is  such that the normal space exhibits a dominated  splitting $\cN_{B(X,\Lambda)}=E\oplus_{\prec}F$, and the space $E$ is not uniformly contracting for the flow  $h_{-}^t\cdot\psi^t_{\cN}$.
From Proposition~\ref{Claim1}, it follows that,
 for every $T>0$, there
exists an ergodic invariant measure $\mu_T$ whose support is contained in $\bigcup_{s\in Sing(x)\cap C}\PP^c_s\cup \tilde C$ such that
$$\int
\log\norm{h^T_{-}.\psi^T_{\cN} \mid_{E}} d\mu_T\geq 0\,,$$
where $\tilde C=\overline{S(C\setminus Sing(x))}$. 

 Proposition~\ref{Claim3} implies that   there is a singular point $\sigma_i\subset C$ so that $\mu_T$ is supported on $\PP^c_{\sigma_i}$. However, this contradicts Lemma~\ref{c.lorenzlike}.
\end{proof}
\section{Consequences of Theorem~\ref{T.main4}}\label{4}
The following corollary is a direct consequence of  Theorem~\ref{T.main4}
\begin{coro}\label{c1Main4}
Let $X\in\mathcal{X}^1(M)$ be homogeneous. Then, $X$ is multi-singular hyperbolic.
\end{coro}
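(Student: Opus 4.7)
The plan is to unwind the definitions and apply Theorem~\ref{T.main4} verbatim to each chain recurrence class. Recall that ``multi-singular hyperbolic'' (for a flow, as opposed to on a single invariant set) is defined to mean that $X$ is multi-singular hyperbolic on each of its chain recurrence classes. So the task reduces to checking the hypothesis of Theorem~\ref{T.main4} at every such class.

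First, I would fix an arbitrary chain recurrence class $C$ of $X$. By the definition of homogeneity for a vector field (the second bullet in the definition recalled just before Lemma~\ref{GY}), the assumption that $X$ is homogeneous means that for each chain recurrence class there exists an open set in which $X$ is homogeneous. The only subtle point is to observe that this open set can be taken to be a neighborhood of $C$: indeed, since $C$ is a chain recurrence class and the notion of being homogeneous in $U$ is an open condition, one may intersect the provided open set with a neighborhood of $C$ (e.g.\ a filtrating neighborhood given by Conley theory) to obtain an open set $U$ containing $C$ in which $X$ remains homogeneous. This step is essentially formal and should not involve any dynamical input.

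Having produced such a neighborhood $U$ of $C$, the hypothesis of Theorem~\ref{T.main4} is satisfied, and its conclusion gives that $X$ is multi-singular hyperbolic on $C$. Since $C$ was an arbitrary chain recurrence class, this shows that $X$ is multi-singular hyperbolic in the sense of the definition stated after the definition of multi-singular hyperbolicity, completing the proof. No genuine obstacle is expected: the only point worth attention is the reading of the definition of ``homogeneous vector field'', to ensure that the open set witnessing homogeneity at $C$ may be taken as a neighborhood of $C$; everything else is an immediate invocation of Theorem~\ref{T.main4}.
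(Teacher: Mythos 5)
Your proposal is correct and takes exactly the paper's route: the paper treats the corollary as an immediate consequence of Theorem~\ref{T.main4}, applied to every chain recurrence class, since a multi-singular hyperbolic flow is by definition one that is multi-singular hyperbolic on each of its classes. One minor remark: under the intended reading of the definition of a homogeneous vector field the open set attached to a class is already a neighborhood of that class, so your intersection step is superfluous (and, as phrased, intersecting with a neighborhood of $C$ would not yield a neighborhood of $C$ unless the given open set contained $C$ to begin with); apart from this cosmetic point, your argument is the paper's.
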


The following lemma along with Theorem~\ref{T.main4} implies Theorem~\ref{T.main1}
\begin{lemm}\label{l.threehom}
Let $M$ be a three-dimensional manifold and let $X\in\mathcal{X}^1(M)$ be a star flow. Then, $X$ is homogeneous.
\end{lemm}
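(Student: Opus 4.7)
The plan is to verify the definition of homogeneity in a neighborhood of every chain recurrence class $C$ of $X$, using the facts that in a $3$-manifold every hyperbolic periodic orbit that is neither a sink nor a source has stable index $1$, and that a Lorenz-like singularity of a $3$-dimensional flow necessarily has periodic index $1$.

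If $C$ is trivial, consisting of a single hyperbolic critical element, homogeneity on a small isolating neighborhood follows from persistence of hyperbolic critical elements under $C^1$-perturbation: on a small enough $C^1$-neighborhood $\cU$ of $X$, the continuation of $C$ is the unique critical element in $U$ and has the same stable index (and, when it is a singularity, the same sign of $sv$), hence the same periodic index. Now suppose $C$ is nontrivial. Lemma \ref{ConConLem} yields star vector fields $Y_n\to X$ with periodic orbits $\gamma_n$ of $Y_n$ converging to $C$ in Hausdorff distance, and Remark \ref{sillas}, which uses Lemma \ref{GY} both on $X$ and on $-X$, forbids these $\gamma_n$ from being contracting or expanding, so their stable index is $1$. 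If $C$ contains a singularity $\sigma$, Corollary \ref{c.lorenzlike} asserts that $\sigma$ is Lorenz-like; in dimension three the Lorenz-like conditions degenerate (stable index $1$ excludes a $\lambda_0$ and stable index $2$ excludes a $\lambda_4$), forcing $sv<0$ when the stable index is $1$ and $sv>0$ when the stable index is $2$, so $Ind_p(\sigma)=1$ in either case.

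To conclude, I choose a filtrating neighborhood $U$ of $C$ and a $C^1$-neighborhood $\cU$ of $X$ so small that every critical element of every $Y\in\cU$ contained in $U$ has periodic index $1$. For a singularity of $Y$ in $U$, continuity of the Lyapunov exponents together with Lemma \ref{4.2} (which ensures $sv\neq 0$ for every singularity in a nontrivial chain recurrence class) preserves the Lorenz-like structure of its continuation, hence its periodic index. For a periodic orbit of $Y\in\cU$ in $U$, the star property gives hyperbolicity, so its index is $0$, $1$, or $2$; stable indices $0$ and $2$ are ruled out by contradiction. A sequence of index-$2$ periodic orbits $\gamma_n$ of $Y_n\to X$ contained in shrinking neighborhoods of $C$ would Hausdorff-accumulate on an $X$-invariant set $L\subset C$ contracting for the extended linear Poincar\'e flow: near a singularity of $C$ this contradicts a uniform version of Lemma \ref{GY}, while the part of $L$ lying away from singularities contradicts the hyperbolicity of the nonsingular part of $C$, which comes from applying Lemma \ref{l.uniformlyattheperiod} to the approximating orbits of the previous paragraph; the symmetric argument applied to $-X$ handles stable index $0$. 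The main technical obstacle is upgrading the pointwise Lemma \ref{GY} into a version uniform on $\cU$, which I achieve by a compactness argument on the finite set of singularities of $X$ in $C$ combined with Lemma \ref{l.uniformlyattheperiod} to control the periodic orbits that stay away from the singularities.
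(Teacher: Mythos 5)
The first half of your argument (trivial classes, Lorenz-like singularities having periodic index $1$ in dimension three, reduction to excluding sinks and sources of nearby vector fields) follows the paper. The gap is in your final step, and it is genuine. You claim (a) that a Hausdorff limit $L\subset C$ of attracting periodic orbits of $Y_n\to X$ is ``contracting for the extended linear Poincar\'e flow'', and (b) that the nonsingular part of $C$ is hyperbolic for that flow, ``by applying Lemma~\ref{l.uniformlyattheperiod} to the approximating orbits''. Neither holds. Uniform hyperbolicity \emph{at the period} controls averages along each periodic orbit, hence Lyapunov exponents of limit measures; it does not give uniform contraction on the limit set, and it does not pass to Hausdorff limits to give hyperbolicity of $\overline{\bigcup\gamma_n}$ or of $C\setminus Sing(X)$ --- that passage is a Liao--Ma\~n\'e type argument, not a formal consequence. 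Worse, claim (b) is simply false when $C$ contains singularities: on the regular part of the Lorenz attractor the (extended) linear Poincar\'e flow is \emph{not} uniformly hyperbolic, because orbit segments passing near the singularity lose expansion in the center-normal direction at a rate governed by the collapse of $\|X\|$; this is exactly why multi-singular hyperbolicity requires the reparametrizing cocycles $h_{\pm}^t$ rather than $\psi^t_{\cN}$ alone. So there is nothing for your index-$2$ orbits to contradict away from the singularities, and the argument does not close. (In the nonsingular case the hyperbolicity of $C$ is true, but it is the theorem of \cite{W1}, not a consequence of Lemma~\ref{l.uniformlyattheperiod}.)

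What is missing is the ingredient the paper actually uses: an index argument through ergodic measures. Concretely, put the natural invariant measures $\nu_n$ on the attracting orbits $\gamma_n$; uniform hyperbolicity at the period together with Proposition~\ref{Claim2} (and Lemma I.5 of \cite{Ma2}) gives $\int\log\norm{h^T_{-}\cdot\psi^T\mid_{N}}d\nu_n<0$; pass to a weak-$*$ limit and take an ergodic component $\nu$ supported in $S=\lim_H\gamma_n\subset C$, where Lemma~\ref{GY} guarantees $S\cap Sing(X)=\emptyset$, so $\nu$ is not atomic at a singularity. Then Theorem~\ref{5.6} (GSW) applies: $\nu$ is hyperbolic and its support meets $H(P)$ for a periodic orbit $P$ with the same index, i.e.\ an attracting orbit, which would have to lie in the nontrivial class $C$ whose periodic orbits are all saddles --- contradiction. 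Without Theorem~\ref{5.6} (or an equivalent ergodic-closing/index-matching substitute), your uniform version of Lemma~\ref{GY} only excludes sinks near the singularities and leaves the accumulation on the regular part of $C$ untouched.
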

\begin{proof}
Suppose that there is a chain recurrence class $C$  consisting only of one hyperbolic critical element. Here, trivially, all critical elements in the class and in a neighborhood of the class exhibit the same index.
This holds for any small perturbation of $X$, so $X$ in the neighborhoods of these chain recurrence classes is homogeneous.

If there is a singularity $\sigma \in C$, only four possible hyperbolic splittings exists  for $\sigma$ in  three dimensions.
\begin{itemize}
    \item $T_{\sigma}M=E^s\oplus E^u$ where $E^s$ is two-dimensional, and $E^u$ is one-dimensional.
     \item $T_{\sigma}M=E^s\oplus E^u$ where $E^u$ is two-dimensional, and $E^s$ is one-dimensional.
     \item $T_{\sigma}M=E^s$ where $E^s$ is three-dimensional.
         \item $T_{\sigma}M=E^u$ where $E^u$ is three-dimensional.
\end{itemize}
The last two cases correspond to singularities in trivial chain recurrence classes.
For the first two cases,  Corollary~\ref{c.lorenzlike}, shows that if the singularity is in a nontrivial chain recurrence class, then  it is Lorenz like. This gives that in the first two cases, the periodic index of $\sigma$ is one for all $\sigma \in C$. These singularities have a hyperbolic continuation on a neighborhood $U$ of $C$ and a neighborhoods $\cU$
of $X$. This implies, by  taking  $U$ smaller if necessary, that all singularities in $U$ have the same periodic index for any vector field in $\cU$.

Only three kinds of hyperbolic splittings exist for a periodic orbit $p$ in a three-dimensional manifold,
\begin{itemize}
    \item $T_{p}M=E^s\oplus<X(p)>\oplus E^u$ where $E^s$ and $E^u$ are one-dimensional.
     \item $T_{p}M=E^s\oplus<X(p)>$ where $E^s$ is two-dimensional.
      \item $T_{p}M=<X(p)>\oplus E^u$ where $E^u$ is two-dimensional.
\end{itemize}
In both last cases, the chain recurrence class of $p$ is itself.

Suppose now that all periodic orbits in C exhibit a splitting of the form $$T_{p}M=E^s\oplus<X(p)>\oplus E^u\,.$$ We are interested in chain recurrence classes C that exhibit singularities, since if not, \cite{W1} 
gives us that $X$ is uniformly hyperbolic in a neighborhood $U$ of $C$ and therefore homogeneous. 

Suppose for contradiction that there is a sequence of attracting periodic orbits  $\gamma_n$ of vector fields $Y_n\to X$ such that $\gamma_n\to_{H} S\subset C$ (if they were repellers, the proof is analogous). From  Lemma~\ref{GY},
we conclude that $S$ contains no singularities and, therefore, $S\subsetneq C$. Since $S$ is the Hausdorff limit of periodic orbits, $S$ is chain recurrent. 

Since $\gamma_n$  are attracting periodic hyperbolic orbits and by Proposition~\ref{Claim2} and Lemma I.5 in \cite{Ma2}), we have that for  $Y_n$ with $n>n_0$, there exists an ergodic measure $\nu_n$ supported in $\gamma_n$ such that
$$\int \log\norm{h^T_{-}.\psi^T\mid_{N_x}} d\nu_n(x)<0  \,.$$
The weak-*limit of $\nu_n$ is an invariant measure $\nu'$ for the flow of $X$. This implies that there exists an ergodic measure $\nu$ (in the ergodic decomposition of $\nu'$) that is supported in $S$ and for that measure
$$\int \log\norm{h^T_{-}.\psi^T\mid_{N_x}} d\nu(x)\leq 0 \,.$$
From Theorem~\ref{5.6}, we get the following: $$\int \log\norm{h^T_{-}.\psi^T\mid_{N_x}} d\nu(x)<0 \,,$$
This is impossible, since $S$ is strictly included in $C$.

\end{proof}

\begin{rema}[Remark 17 in \cite{BdL}]\label{singhip}
If a chain recurrence class $C$ of a multi-singular hyperbolic vector field $X$ has all singularities with the same index, it is singular hyperbolic. 
\end{rema}

Theorem~\ref{T.main1} 
and Remark~\ref{singhip} directly imply Theorem~\ref{T.main2}.

 Theorem~\ref{T.main3} is obtained as a consequence Theorem~\ref{T.main4}, along with the next two results found in \cite{GSW}. We restate them here. However, we replace the hypothesis of $X$ being a \emph{generic} star
vector field, with the generic condition used in the proof. This condition states that the chain recurrence classes of periodic points do not brake under perturbations. The proofs are identical and not short, so we will not repeat them here.

\begin{lemm}[\label{4.5}Lemma 4.5 and Theorem 5.7 in \cite{GSW}]
Let $X$ be a $C^1$ a star vector field and Let $C$ be a nontrivial chain recurrence class.
Suppose that $p$ and $q$ are two critical elements of $X$ in $C$ then there is a $\cC^1$ neighborhood $\cU$ of $X$ such that $C(p)=C(q)=C$.
Then there is a neighborhood $U$ of $C$ such that,
for every two  periodic points $p,q \in U$,

$$Ind(p) = Ind(q),$$
Furthermore, for any singularity $\sigma'$ in $U$,
$$Ind(\sigma') = Ind(q) \hspace{0.5cm}\text{ if } sv(\sigma')<0\,, $$
or
$$Ind(\sigma') = Ind(q)+1 \hspace{0.5cm}\text{ if } sv(\sigma)>0\,, $$
(i.e. $X$ is homogeneous in $U$). 
\end{lemm}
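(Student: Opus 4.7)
The plan is in two stages: first I prove that periodic orbits of all nearby vector fields staying in a neighborhood $U$ of $C$ share a common index, and second I relate this index to the stable indices of singularities in $U$ via the saddle-value dichotomy.

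For the periodic orbit homogeneity I would argue by contradiction. Lemma~\ref{l.uniformlyattheperiod} furnishes a $C^1$ neighborhood $\cU'\subseteq\cU$ and constants $\eta, T>0$ such that every periodic orbit of period at least $T$ of every $Y\in\cU'$ admits a uniformly dominated and uniformly hyperbolic splitting of its normal bundle. Suppose there exist vector fields $Y_n\to X$ in $\cU$ and periodic orbits $p_n, q_n\subset U$ of $Y_n$ with $\operatorname{Ind}(p_n)\neq\operatorname{Ind}(q_n)$. The hypothesis that the equality $C(p)=C(q)=C$ for critical elements in $C$ is preserved throughout $\cU$ implies that $p_n$ and $q_n$ are chain related for $Y_n$. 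Applying Hayashi's connecting lemma inside $\cU$, followed by a further $C^1$-small perturbation along the resulting heterodimensional cycle, produces some $Y_n''\in\cU$ with a periodic orbit whose linear Poincar\'e spectrum contains a zero Lyapunov exponent. This contradicts the uniform hyperbolicity at the period guaranteed by Lemma~\ref{l.uniformlyattheperiod}. Hence all periodic orbits of all $Y\in\cU'$ lying in $U$ share a common index $s=\operatorname{Ind}(q)$.

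For the singularity identification, let $\sigma'\in U$ be a singularity. Since $C(\sigma')\subseteq C$ is nontrivial, Corollary~\ref{c.lorenzlike} ensures $\sigma'$ is Lorenz-like, so $sv(\sigma')\neq 0$. Lemma~\ref{ConConLem} together with Remark~\ref{sillas} produces periodic orbits $\gamma_n$ of nearby star vector fields which lie in $U$ for large $n$ and converge in the Hausdorff topology to $C(\sigma')$; by the previous step they all have index $s$. Lemma~\ref{lorenzlike-cocycle} supplies a dominated splitting of the extended linear Poincar\'e flow over $\PP E^{cs}_{\sigma'}$ (or $\PP E^{cu}_{\sigma'}$) of the appropriate dimension, and uniqueness of dominated splittings of prescribed dimension forces it to coincide with the stable/unstable splitting of the accumulating orbits. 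When $sv(\sigma')<0$ the periodic index of $\sigma'$ coincides with its stable index, giving $\operatorname{Ind}(\sigma')=s=\operatorname{Ind}(q)$; when $sv(\sigma')>0$ the periodic index is one less than the stable index, so $\operatorname{Ind}(\sigma')=s+1=\operatorname{Ind}(q)+1$, as claimed.

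The main obstacle is the perturbative step in the periodic orbit argument: manufacturing a non-hyperbolic periodic orbit from two chain-related hyperbolic saddles of different indices while keeping every intermediate vector field inside the star neighborhood $\cU$. The robust chain recurrence hypothesis $C(p)=C(q)=C$ for every $Y\in\cU$, which replaces the generic condition invoked in \cite{GSW}, is precisely what allows the connecting lemma to be applied at every $Y_n$ without first restricting to a residual subset of star vector fields. Once this is in place the singularity step is essentially forced by the Lorenz-like structure and the dimension bookkeeping of the dominated splittings.
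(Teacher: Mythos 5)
The paper does not actually prove this lemma: it explicitly defers to \cite{GSW}, stating that "the proofs are identical and not short, so we will not repeat them here," the only change being the replacement of the generic hypothesis by the robust chain-recurrence hypothesis. So there is no in-paper proof to compare against; your two-stage plan (periodic-orbit homogeneity from uniform hyperbolicity at the period plus a perturbation, then singularity bookkeeping from the Lorenz-like structure) does mirror the strategy of \cite{GSW}, and your closing remark about where the robust hypothesis enters is the right instinct.

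However, two steps as written would not close. First, the jump from "$p_n$ and $q_n$ are chain related hyperbolic saddles of distinct indices" to "a nearby $Y_n''\in\cU$ has a non-hyperbolic periodic orbit" is the real content of the periodic-orbit step and cannot be settled by a single appeal to Hayashi's connecting lemma: you need the chain connecting lemma for pseudo-orbits to manufacture an actual heterodimensional cycle, then a Liao/Gan-style selection of a long periodic orbit spending comparable time near $p_n$ and $q_n$ so that the Lyapunov exponent at the critical index is close to zero, then a Franks-type perturbation to make it vanish, all while remaining in $\cU$. Moreover the hypothesis $C(p)=C(q)=C$ throughout $\cU$ only controls the continuations of $p$ and $q$; to know that arbitrary new periodic orbits $p_n,q_n\subset U$ of $Y_n$ are chain related you must invoke the filtrating-neighborhood structure of the class. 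Second, in the singularity step, "uniqueness of dominated splittings of prescribed dimension" cannot be the tool that forces the splitting from Lemma~\ref{lorenzlike-cocycle} and the one coming from the accumulating periodic orbits to agree, since uniqueness applies \emph{after} you know both have the same dimension, and equality of dimension is exactly what is to be shown. The actual argument must use that the orbits $\gamma_n$ spend unboundedly long stretches near $\sigma'$, so the uniform domination of index $s=\operatorname{Ind}(q)$ inherited from the periodic orbits and the domination over $\PP E^{cs}_{\sigma'}$ of index $\operatorname{Ind}(\sigma')$ (or $\operatorname{Ind}(\sigma')-1$) must coincide on the common limit set or else contradict one another; this is where the Lorenz-like escaping-manifold condition is really used.
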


Since any robustly transitive chain recurrence class is such that if $p$ and $q$ are two critical elements of $X$ in $C$ then there is a $\cC^1$ neighborhood $\cU$ of $X$ such that  $C(p)=C(q)=C$, we get  Theorem~\ref{T.main3} as a direct consequence of Lemma~\ref{4.5} and Theorem~\ref{T.main4}.

 \section*{Acknowledgements}
 We would like to thank the Committee for Women in Mathematics (CWM) and  Fondecyt Iniciaci\'on project  Nº 11190815 (coordinated by Nelda Jaque Tamblay) for the financial support that allowed this work to occur. 
 The first author received support form JCNE/FAPERJ coordinated by Andres Koropecki for a research visit.
 
 We would like to thank the  hospitality of  IME-UFF, where this work was partially carried on.

\end{document}